%% file: samplepaper.tex
\documentclass[runningheads]{llncs}
\usepackage[T1]{fontenc}
\usepackage{graphicx}
\usepackage[dvipsnames]{xcolor}
\usepackage{amsmath}
\usepackage{amssymb}

\usepackage{algorithm}
\usepackage[noend]{algpseudocode}
\usepackage{babel}
\usepackage{soul}
\usepackage{subcaption}
\usepackage{comment}
\usepackage{soul}

\usepackage[title]{appendix}
\usepackage{tikz}
\usepackage{listings}

\definecolor{codegreen}{rgb}{0,0.6,0}
\definecolor{codegray}{rgb}{0.5,0.5,0.5}
\definecolor{codepurple}{rgb}{0.58,0,0.82}
\definecolor{backcolour}{rgb}{0.95,0.95,0.92}

\lstdefinestyle{mystyle}{
  backgroundcolor=\color{backcolour},   commentstyle=\color{codegreen},
  keywordstyle=\color{magenta},
  numberstyle=\tiny\color{codegray},
  stringstyle=\color{codepurple},
  basicstyle=\ttfamily\footnotesize,
  breakatwhitespace=false,         
  breaklines=true,                 
  captionpos=b,                    
  keepspaces=true,                 
  numbers=left,                    
  numbersep=5pt,                  
  showspaces=false,                
  showstringspaces=false,
  showtabs=false,                  
  tabsize=2
}

\begin{document}
\title{Generating 2-Gray codes for grand Motzkin paths and grand Dyck paths with air pockets in constant amortized time}
\titlerunning{Generating 2-Gray codes for GMAP and GDAP in CAT}
% If the paper title is too long for the running head, you can set
% an abbreviated paper title here
%

\renewcommand{\footnoterule}{\vspace{2pt}\hrule width\textwidth height0.4pt\vspace{2pt}}

% \author{Lei Dong\inst{1}^{,}\inst{2} \and Bowie Liu \inst{2} \and Dennis Wong \inst{2} \and Lin Chen \inst{1}^{,}\inst{2} \and  \\ Chan-Tong Lam \inst{2} \and Sio-Kei Im \inst{2}}
\author{Lei Dong\inst{1,2} \and Bowie Liu\inst{2} \and Dennis Wong\inst{2} \and Lin Chen\inst{1,2} \and \\ Chan-Tong Lam\inst{2} \and Sio-Kei Im\inst{2}}
%\author{Lei Dong\inst{1}\orcidID{0000-1111-2222-3333} \and Bowie Liu \inst{1} \and Dennis Wong \inst{1} \and Lin Chen \inst{1} \and Chan-Tong Lam \inst{1} \and Sio-Kei Im \inst{2}}
%
\authorrunning{Lei Dong et al.}
% First names are abbreviated in the running head.
% If there are more than two authors, 'et al.' is used.
%
\institute{School of Computer Science and Engineering,
Sun Yat-sen University, China
\email{chenlin69@mail.sysu.edu.cn}\\
%\url{http://www.springer.com/gp/computer-science/lncs}  
\and
Faculty of Applied Sciences, Macao Polytechnic University, Macao
\\ \email{\{lei.dong, bowen.liu, cwong, lchen, ctlam, marcusim\}@mpu.edu.mo}
}
%\institute{Faculty of Applied Science, Macao Polytechnic University, Macao
%\email{cwong@mpu.edu.mo}\\
%\url{http://www.springer.com/gp/computer-science/lncs} \and
%Macao Polytechnic University, Macao\\
%\email{\{abc,lncs\}@uni-heidelberg.de}}
%
\maketitle              
% typeset the header of the contribution

\vspace{-3em} 
\vspace{2em}   
\begin{abstract}
A {grand Motzkin path with air pockets} is a non-empty lattice path in the first and fourth quadrant of \(\mathbb{Z}^2\), starting at the origin \((0,0)\), ending on the \(x\)-axis, and consisting of up-steps \((1, 1)\), horizontal steps \((1, 0)\), down-steps \((1, -k)\) where \(k \geq 1\), and with no consecutive down-steps. A {grand Dyck path with air pockets} is a grand Motzkin path with air pockets that uses no horizontal steps. We present the first known 2-Gray codes for grand Motzkin paths with air pockets. Setting the number of horizontal steps to zero in our algorithm yields the first known 2-Gray codes for grand Dyck paths with air pockets.
Our three-stage algorithm generates each path in constant amortized time per string, using \(O(n^2)\) memory. We also provide enumeration formulae for grand Motzkin paths and grand Dyck paths with air pockets.

\keywords{Motzkin path  \and Dyck path \and Air pockets \and Lattice path.}
\end{abstract}

\section{Introduction}

A \emph{grand Motzkin path with air pockets} is a non-empty lattice path in the first and fourth quadrant of \(\mathbb{Z}^2\), starting at the origin \((0,0)\), ending on the \(x\)-axis, and consisting of up-steps \((1, 1)\), horizontal steps \((1, 0)\), and down-steps \((1, -k)\) for \(k \geq 1\), with the condition that no two down-steps are consecutive.
As an example, Figure~\ref{fig:M&D} illustrates the ten grand Motzkin paths with air pockets for \(n = 3\). 
Each grand Motzkin path with air pockets can be represented by a unique tuple $(a_1, a_2, \cdots, a_n)$, where \(a_i\) denotes the step at position \(i\) in the path, with \(a_i = 1\) representing an up-step \((1, 1)\), \(a_i = 0\) representing a horizontal step \((1, 0)\), and \(a_i = -k\) representing a down-step \((1, -k)\) for \(k \geq 1\). 
The tuples corresponding to the ten grand Motzkin paths with air pockets in Figure~\ref{fig:M&D} for \(n = 3\) are as follows:
\begin{center}
\begin{tabular}{l}
$(-2, 1, 1), (-1, 0, 1), (-1, 1, 0),
(0, -1, 1), (0, 0, 0),$ \\
$(0, 1, -1), (1, -2, 1), (1, -1, 0),
(1, 0, -1), (1, 1, -2).$
\end{tabular}
\end{center}
A \emph{grand Dyck path with air pockets} is a grand Motzkin path with air pockets that uses no horizontal steps. 
As an example, the tuples corresponding to the three grand Dyck paths with air pockets (underlined in Figure~\ref{fig:M&D}) for $n = 3$ are $(-2, 1, 1)$,  $(1, -2, 1)$,  and $(1, 1, -2)$.

\begin{figure}[t]
% First row with 5 subfigures
\centering
\begin{subfigure}{0.19\textwidth}
    \centering
    \begin{tikzpicture}[scale=0.3]
        \draw[->] (0,0) -- (4,0) node[below] {$x$};
        \draw[->] (0,-2.7) -- (0,2.7) node[above] {$y$};
        \foreach \x in {1,2,3} \draw (\x,0.1) -- (\x,-0.1) node[below] {\x};
        \foreach \y in {-2,-1,0,1,2} \draw (0.1,\y) -- (-0.1,\y) node[left=0.03cm] {\y};
        \draw[black, line width=0.5mm] (0,0) -- (1,-2) -- (2,-1) -- (3,0);
        \foreach \x/\y in {0/0, 1/-2, 2/-1, 3/0} \draw[draw=black, fill=white] (\x,\y) circle (3pt);
    \end{tikzpicture}
    \caption{\underline{\color{black}$(-2, 1, 1)$}}
\end{subfigure}\hfill
\begin{subfigure}{0.19\textwidth}
    \centering
    \begin{tikzpicture}[scale=0.3]
        \draw[->] (0,0) -- (4,0) node[below] {$x$};
        \draw[->] (0,-2.7) -- (0,2.7) node[above] {$y$};
        \foreach \x in {1,2,3} \draw (\x,0.1) -- (\x,-0.1) node[below] {\x};
        \foreach \y in {-2,-1,0,1,2} \draw (0.1,\y) -- (-0.1,\y) node[left=0.03cm] {\y};
        \draw[black, line width=0.5mm] (0,0) -- (1,-1) -- (2,-1) -- (3,0);
        \foreach \x/\y in {0/0, 1/-1, 2/-1, 3/0} \draw[draw=black, fill=white] (\x,\y) circle (3pt);
    \end{tikzpicture}
    \caption{$(-1, 0, 1)$}
\end{subfigure}\hfill
\begin{subfigure}{0.19\textwidth}
    \centering
    \begin{tikzpicture}[scale=0.3]
        \draw[->] (0,0) -- (4,0) node[below] {$x$};
        \draw[->] (0,-2.7) -- (0,2.7) node[above] {$y$};
        \foreach \x in {1,2,3} \draw (\x,0.1) -- (\x,-0.1) node[below] {\x};
        \foreach \y in {-2,-1,0,1,2} \draw (0.1,\y) -- (-0.1,\y) node[left=0.03cm] {\y};
        \draw[black, line width=0.5mm] (0,0) -- (1,-1) -- (2,0) -- (3,0);
        \foreach \x/\y in {0/0, 1/-1, 2/0, 3/0} \draw[draw=black, fill=white] (\x,\y) circle (3pt);
    \end{tikzpicture}
    \caption{$(-1, 1, 0)$}
\end{subfigure}\hfill
\begin{subfigure}{0.19\textwidth}
    \centering
    \begin{tikzpicture}[scale=0.3]
        \draw[->] (0,0) -- (4,0) node[below] {$x$};
        \draw[->] (0,-2.7) -- (0,2.7) node[above] {$y$};
        \foreach \x in {1,2,3} \draw (\x,0.1) -- (\x,-0.1) node[below] {\x};
        \foreach \y in {-2,-1,0,1,2} \draw (0.1,\y) -- (-0.1,\y) node[left=0.03cm] {\y};
        \draw[black, line width=0.5mm] (0,0) -- (1,0) -- (2,-1) -- (3,0);
        \foreach \x/\y in {0/0, 1/0, 2/-1, 3/0} \draw[draw=black, fill=white] (\x,\y) circle (3pt);
    \end{tikzpicture}
    \caption{$(0, -1, 1)$}
\end{subfigure}\hfill
\begin{subfigure}{0.19\textwidth}
    \centering
    \begin{tikzpicture}[scale=0.3]
        \draw[->] (0,0) -- (4,0) node[below] {$x$};
        \draw[->] (0,-2.7) -- (0,2.7) node[above] {$y$};
        \foreach \x in {1,2,3} \draw (\x,0.1) -- (\x,-0.1) node[below] {\x};
        \foreach \y in {-2,-1,0,1,2} \draw (0.1,\y) -- (-0.1,\y) node[left=0.03cm] {\y};
        \draw[black, line width=0.5mm] (0,0) -- (1,0) -- (2,0) -- (3,0);
        \foreach \x/\y in {0/0, 1/0, 2/0, 3/0} \draw[draw=black, fill=white] (\x,\y) circle (3pt);
    \end{tikzpicture}
    \caption{$(0, 0, 0)$}
\end{subfigure}

\vspace{0.5cm}

% Second row with 5 subfigures
\begin{subfigure}{0.19\textwidth}
    \centering
    \begin{tikzpicture}[scale=0.3]
        \draw[->] (0,0) -- (4,0) node[below] {$x$};
        \draw[->] (0,-2.7) -- (0,2.7) node[above] {$y$};
        \foreach \x in {1,2,3} \draw (\x,0.1) -- (\x,-0.1) node[below] {\x};
        \foreach \y in {-2,-1,0,1,2} \draw (0.1,\y) -- (-0.1,\y) node[left=0.03cm] {\y};
        \draw[black, line width=0.5mm] (0,0) -- (1,0) -- (2,1) -- (3,0);
        \foreach \x/\y in {0/0, 1/0, 2/1, 3/0} \draw[draw=black, fill=white] (\x,\y) circle (3pt);
    \end{tikzpicture}
    \caption{$(0, 1, -1)$}
\end{subfigure}\hfill
\begin{subfigure}{0.19\textwidth}
    \centering
    \begin{tikzpicture}[scale=0.3]
        \draw[->] (0,0) -- (4,0) node[below] {$x$};
        \draw[->] (0,-2.7) -- (0,2.7) node[above] {$y$};
        \foreach \x in {1,2,3} \draw (\x,0.1) -- (\x,-0.1) node[below] {\x};
        \foreach \y in {-2,-1,0,1,2} \draw (0.1,\y) -- (-0.1,\y) node[left=0.03cm] {\y};
        \draw[black, line width=0.5mm] (0,0) -- (1,1) -- (2,-1) -- (3,0);
        \foreach \x/\y in {0/0, 1/1, 2/-1, 3/0} \draw[draw=black, fill=white] (\x,\y) circle (3pt);
    \end{tikzpicture}
    \caption{\underline{\color{black}$(1, -2, 1)$}}
\end{subfigure}\hfill
\begin{subfigure}{0.19\textwidth}
    \centering
    \begin{tikzpicture}[scale=0.3]
        \draw[->] (0,0) -- (4,0) node[below] {$x$};
        \draw[->] (0,-2.7) -- (0,2.7) node[above] {$y$};
        \foreach \x in {1,2,3} \draw (\x,0.1) -- (\x,-0.1) node[below] {\x};
        \foreach \y in {-2,-1,0,1,2} \draw (0.1,\y) -- (-0.1,\y) node[left=0.03cm] {\y};
        \draw[black, line width=0.5mm] (0,0) -- (1,1) -- (2,0) -- (3,0);
        \foreach \x/\y in {0/0, 1/1, 2/0, 3/0} \draw[draw=black, fill=white] (\x,\y) circle (3pt);
    \end{tikzpicture}
    \caption{$(1, -1, 0)$}
\end{subfigure}\hfill
\begin{subfigure}{0.19\textwidth}
    \centering
    \begin{tikzpicture}[scale=0.3]
        \draw[->] (0,0) -- (4,0) node[below] {$x$};
        \draw[->] (0,-2.7) -- (0,2.7) node[above] {$y$};
        \foreach \x in {1,2,3} \draw (\x,0.1) -- (\x,-0.1) node[below] {\x};
        \foreach \y in {-2,-1,0,1,2} \draw (0.1,\y) -- (-0.1,\y) node[left=0.03cm] {\y};
        \draw[black, line width=0.5mm] (0,0) -- (1,1) -- (2,1) -- (3,0);
        \foreach \x/\y in {0/0, 1/1, 2/1, 3/0} \draw[draw=black, fill=white] (\x,\y) circle (3pt);
    \end{tikzpicture}
    \caption{$(1, 0, -1)$}
\end{subfigure}\hfill
\begin{subfigure}{0.19\textwidth}
    \centering
    \begin{tikzpicture}[scale=0.3]
        \draw[->] (0,0) -- (4,0) node[below] {$x$};
        \draw[->] (0,-2.7) -- (0,2.7) node[above] {$y$};
        \foreach \x in {1,2,3} \draw (\x,0.1) -- (\x,-0.1) node[below] {\x};
        \foreach \y in {-2,-1,0,1,2} \draw (0.1,\y) -- (-0.1,\y) node[left=0.03cm] {\y};
        \draw[black, line width=0.5mm] (0,0) -- (1,1) -- (2,2) -- (3,0);
        \foreach \x/\y in {0/0, 1/1, 2/2, 3/0} \draw[draw=black, fill=white] (\x,\y) circle (3pt);
    \end{tikzpicture}
    \caption{\underline{\color{black}$(1, 1, -2)$}}
\end{subfigure}

% Figure caption
\caption{Grand Motzkin paths with air pockets for \( n = 3 \). Grand Motzkin paths with air pockets that are underlined are grand Dyck paths with air pockets.}
\label{fig:M&D}
\end{figure}

The number of grand Dyck paths with air pockets, also known as \emph{Whitney numbers of the second kind}, corresponds to sequence A051291 in the Online Encyclopedia of Integer Sequences~\cite{Sloane2010}. 
Baril et al.~\cite{Baril23b,Baril23a,baril2024grand} studied enumerative results and derived generating functions for grand Dyck paths with air pockets under various restrictions. 
For example, the number of grand Dyck paths with air pockets of length $n$ is given by the following generating function:
\begin{equation}\nonumber
G(t) = \frac{1 + 2t^2 - t^3 - (1 - t) \sqrt{1 - 2t - t^2 - 2t^3 + t^4}}{2t \sqrt{1 - 2t - t^2 - 2t^3 + t^4}}.
\end{equation}
The first ten terms of this generating function are 1, 2, 3, 7, 17, 40, 97, 238, 587, and 1458. 
However, no closed-form formula is known for this sequence, and evaluating the generating function for large $n$ is computationally expensive due to the complex square root term. 
The first ten terms for the enumeration sequence of grand Motzkin paths with air pockets are 1, 3, 10, 32, 106, 361, 1247, 4351, 15304, and 54172. 
More interestingly, the number of grand Motzkin paths with air pockets is not even listed in the Online Encyclopedia of Integer Sequences as of this writing, and no closed-form formula has been established. 
This paper presents the first known closed-form formulae for the enumeration of both grand Dyck paths and grand Motzkin paths with air pockets.

Dyck paths and Motzkin paths are well-studied combinatorial objects with a wide variety of applications in areas such as geometry, algebra, probability, and computer science. 
%Knuth dedicates fascicle 3A of \emph{The Art of Computer Programming, Volume 4A}~\cite{Art4} to combinations, which are in bijection with simple Dyck paths via the ballot theorem. 
For example, Dyck paths have been extensively studied in Knuth's \emph{The Art of Computer Programming, Volume 4, Fascicle 4}~\cite{Knuth06Fasc4}, particularly in the contexts of binary trees, parenthesis matchings, and lattice paths.
Motzkin paths, which generalize Dyck paths by allowing horizontal steps, appear in diverse applications including RNA secondary structure modeling, non-crossing partitions, and bounded lattice paths~\cite{goulden1983combinatorial,stanton1986constructive}. 
Both Dyck paths and Motzkin paths encode a wide variety of combinatorial objects, including binary trees, balanced parentheses, ballot sequences, and stack-sortable permutations \cite{BENCHEKROUN1996605,DEUTSCH1998253,DEUTSCH2002655,LABELLE19901,Panayotopoulos1995,SabriVajnovszki+2019+109+119,viennot1980theorie,ballotsequence}.
%Dyck paths and Motzkin paths are well-studied combinatorial objects with a wide variety of applications in areas such as geometry, algebra, and probability. 
%Motzkin paths, in particular, appear in geometry, number theory, and combinatorics. 
%Knuth dedicated a whole chapter in The Art of Computer Programming~\cite{Art4} to combinations and their applications, which relate closely to Motzkin paths and Dyck paths. 
%Dyck paths and Motzkin paths encode a wide variety of combinatorial objects, including binary trees, balanced parentheses, ballot sequences, and stack-sortable permutations~\cite{BENCHEKROUN1996605,DEUTSCH1998253,DEUTSCH2002655,LABELLE19901,Panayotopoulos1995,SabriVajnovszki+2019+109+119,viennot1980theorie,ballotsequence}. 
An interesting application of grand Motzkin paths or grand Dyck paths with air pockets, as discussed in~\cite{baril2024grand}, is to model aviation paths where the ``air pockets'' represent turbulences with consecutive turbulences are merged into one. %models aviation paths where the "air pockets" represent turbulences and consecutive turbulences are merged into one. 
Grand Motzkin paths or grand Dyck paths with air pockets also have applications in queue or stack evolution, interpreting paths with resets or ``catastrophes''~\cite{banderier2017lattice}, and modeling stock market crashes.  
For more applications of Motzkin paths, Dyck paths, and related structures, see~\cite{goulden1983combinatorial,mutze2023combinatorial,Sav97,stanton1986constructive}.

%Dyck paths and Motzkin paths are well studied combinatorial objects that have a wide variety of applications.
%For example, Knuth dedicated a whole chapter in The Art of Computer Programming~\cite{Art4} to discuss combinations and their applications. 
%Dyck paths, on the other hand, have been used to encode a wide variety of combinatorial objects, including binary trees, balanced parentheses, ballot sequences, and stack-sortable permutations~\cite{BENCHEKROUN1996605, DEUTSCH1998253, DEUTSCH2002655, LABELLE19901, Panayotopoulos1995, SabriVajnovszki+2019+109+119, viennot1980theorie, ballotsequence}. 
%\hl{An interesting application} discussed in~\cite{} on grand Motzkin path or grand Dyck paths with air pockets is about aviation paths, where "air pockets" represent turbulences, and consecutive turbulences are merged into one.
%Grand Motzkin path or grand Dyck paths with air pockets also has applications in queue or stack evolution interpretation of paths with "catastrophes" or resets~\cite{}. 
%For more applications of combinations and Dyck paths, see~\cite{MR702512, mutze2022combinatorial, Sav97, stanton2012constructive}. 

One of the most important aspects of combinatorial generation is to list the instances of a combinatorial object so that consecutive instances differ by a specified \emph{closeness condition} involving a constant amount of change. 
Lists of this type are called \emph{Gray codes}. 
This terminology is due to the eponymous \emph{binary reflected Gray code} (BRGC) by Frank Gray, which orders the $2^n$ binary strings of length $n$ so that consecutive strings differ in one bit. 
For example, when $n = 5$ the order is
\begin{equation}
\begin{tabular}{l}
\label{comb_order_n5}00000, 00001, 00011, 00010, 00110, 00111, 00101, 00100,\\
01100, 01101, 01111, 01110, 01010, 01011, 01001, 01000,\\
11000, 11001, 11011, 11010, 11110, 11111, 11101, 11100,\\
10100, 10101, 10111, 10110, 10010, 10011, 10001, 10000.
\end{tabular}
\end{equation}
The BRGC listing is a \emph{1-Gray code} since consecutive strings differ by one bit change. 
%We note that the order is also \emph{cyclic} because the last and first
%strings also differ by the closeness condition, and this property holds for all $n$.
In this paper, we are focusing on \emph{2-Gray code}, where consecutive strings differ by at most two symbol changes.

An interesting related problem is thus to discover Gray codes for grand Motzkin paths and grand Dyck paths with air pockets and their relatives. 
%It is worth noting that there is no 1-Gray code for grand Dyck paths and grand Motzkin paths with air pockets. 
%For example, given a grand Dyck path or grand Motzkin path with air pockets, updating a up-step or a horizontal-step would require an update on a down-step since the path has to end on the x-axis, and thus requiring at least two symbols change. 
It is worth noting that there is no 1-Gray code for grand Dyck paths or grand Motzkin paths with air pockets. 
For example, in such a path, changing an up-step would require at least a compensating change to a down-step or a horizontal step to preserve the ending point on the $x$-axis. 
The same holds when updating a down-step or a horizontal step. Thus, at least two symbol changes are needed to reach another valid path.
For simple grand Dyck paths with downstep restricted to $k=1$, each such path can be represented by a combination with $n$ 1s (representing up-steps) and $n$ 0s (representing down-steps) among $2n$ total steps.
%n positions (for the 1s) among the 
%with 1 representing up-steps and 0 representing down-steps. 
Several algorithms have been proposed to generate combinations \cite{360343,knuth1970combinations,Art4,merino2022combinatorial,mutze2023combinatorial,ruskey1988generating,ruskey1996combinatorial}. 
The \emph{weight} of a binary string $\alpha$, denoted by $|\alpha|$, is defined as the number of 1s it contains.
The BRGC listing can be used to create a 2-Gray code for combinations. 
The Gray code is created by \emph{filtering} the BRGC,  which retains only strings in the listing with weight $ k $, and is referred to as the \emph{revolving door} Gray code for combinations. %~\cite{Wilf}. 
For example, filtering the BRGC listing in~(\ref{comb_order_n5}) gives the following 2-Gray code for combinations with $n=5$ and $k=2$:  
\begin{equation} \label{eq:RevolvingDoor42}\nonumber
\begin{aligned}
00011, 00110, 00101, 01100, 01010, 01001, 11000, 10100, 10010, 10001.
\end{aligned}
\end{equation}
For more information about Gray codes induced by the BRGC, see~\cite{10.1007/978-3-030-85088-3_15} and~\cite{flipswap}. 
%Grand Motzkin paths, on the other hand, are an extension of grand Dyck paths which allow horizontal steps. 
Several algorithms have also been proposed to generate grand Motzkin paths with downstep restricted to $k=1$, with constructions based on ECO methods or recursive decompositions~\cite{barcucci2000eco,Prodinger2021}.
Baril et al.~\cite{Baril23b,Baril23a,baril2024grand} first introduced grand Dyck paths with air pockets and provided generating functions of grand Dyck paths under various constraints~\cite{BarilBK20,BarilBousquetMelouKirgizovNaima2025,BarilKirgizov2021,BarilKirgizov2022,baril2024grand}. 
Subsequent work by Baril and Kirgizov~\cite{BarilKirgizov2021} established bijections to pattern avoiding Dyck paths, and 
Baril et al. explored several variants of grand Motzkin paths~\cite{BarilBK20,BarilFlorezRamirez2024,BarilKirgizov2021,BarilKirgizovNaima2025,BarilKirgizovPetrossian2019,BarilKirgizovRamirezVillamizar2025,BarilPallo2014,BarilPallo2015,BarilPetrossian2015,BarilRamirez2025}, but no Gray codes have been discovered for them.
%but {no Gray codes were discovered}.
This paper thus {provides} the first known 2-Gray code for both grand Motzkin paths and grand Dyck paths with air pockets.
Since there is no 1-Gray code for grand Motzkin paths or grand Dyck paths with air pockets, our Gray codes are optimal.

%Baril et al.~\cite{Baril23b,Baril23a,baril2024grand} first introduced the idea of grand Dyck paths with air pockets, and have 
%provided generating functions for the number of grand Dyck paths with air pockets under various constraints, but no Gray codes were discovered.
%This paper thus provide the first known cyclic 2-Gray code for grand Motzkin paths and grand Dyck paths with air pockets.

The rest of the paper is outlined as follows. 
In Section~\ref{sec:motzkin}, we describe a three-stage  algorithm for generating a 2-Gray code for grand Motzkin paths with air pockets.
% , and prove their Gray code property. 
Then in Section~\ref{sec:dyck}, we extend our algorithm to generate a 2-Gray code for grand Dyck paths with air pockets. 
We present enumeration formulae for grand Motzkin paths and grand Dyck paths with air pockets and prove their correctness in Section~\ref{sec:enumeration}.

\section{Generating grand Motzkin paths with air pockets}\label{sec:motzkin}

In this section, we describe a three-stage algorithm for generating a 2-Gray code for grand Motzkin paths with air pockets. 
We will discuss extending this algorithm to generate a 2-Gray code for grand Dyck paths with air pockets in the next section. 
The construction of the Gray code for grand Motzkin paths with air pockets involves the following three stages:
\begin{enumerate}
\item For a grand Motzkin path with $d$ down-steps, first generate a Gray code for binary strings $q = q_1 q_2 \cdots q_n$ of length $n$ and weight $d$. Here, $q_i = 1$ indicates that position $i$ is a down-step, and $q_i = 0$ indicates that position $i$ is an up-step or a horizontal step;
\item Given a string $q = q_1 q_2 \cdots q_n$ with weight $d$ from the first stage, generate a Gray code for binary strings $r = r_1 r_2 \cdots r_{n-d}$, where $r_i = 1$ indicates that the $i$-th position in $q$ with $q_j = 0$ corresponds to an up-step, and $r_i = 0$ indicates a horizontal step;
\item Given the strings $q$ and $r$ from the first and second stages, compute the total number of up-steps, denoted by $s$, {which is equal to the weight of $r$}. 
Then, generate a Gray code to distribute the integer $s$ across the positions where $q_i = 1$ (i.e., the down-steps). 
Each down-step receives a positive integer $t \geq 1$, and the sum of these $d$ values must equal $s$.
\end{enumerate}
Finally, we combine these listings to construct a 2-Gray code for grand Motzkin paths with air pockets. 
For example, consider the grand Motzkin path with air pockets as shown in Figure~\ref{fig:motzkin_example}, which can be represented by the tuple $m = (-2, 1, 1, 1, 0, -2, 1, 1, 1, -1, 0, -1)$. 
Then, $q = 100001000101$ and $r = 11101110$. 
The total number of up-steps is six, and thus $s = 6$ which equals the weight of $r$. 
We distribute the integer $s=6$ across the four down-step positions: two to the first and sixth positions of $m$ (resulting in $-2$), and one to the tenth and twelfth positions of $m$ (resulting in $-1$).
In the following subsections, we detail the procedure for generating Gray codes for the combinatorial objects in each stage of constructing grand Motzkin paths with air pockets. 

% Creating the figure for the grand Motzkin path
\begin{figure}[t]
\centering
\begin{tikzpicture}[scale=0.6]
    % Grand Motzkin Path: [-2, 1, 1, 1, 0, -2, 1, 1, 1, -1, 0, -1]
    % Path: (0,0) -> (1,-2) -> (2,-1) -> (3,0) -> (4,1) -> (5,1) -> (6,-1) -> (7,0) -> (8,1) -> (9,2) -> (10,1) -> (11,1) -> (12,0)
    % y-values: 0, -2, -1, 0, 1, 1, -1, 0, 1, 2, 1, 1, 0
    % Min y = -2, Max y = 2
    \draw[->] (0,0) -- (13,0) node[right] {$x$}; % x-axis, length 13 for 12 steps
    \draw[->] (0,-2.5) -- (0,2.5) node[above] {$y$}; % y-axis, from -2.5 to 2.5
    % x-axis ticks and labels
    \foreach \x in {1,2,...,12} {
        \draw (\x,0.1) -- (\x,-0.1) node[below] {\x};
    }
    \draw (0,0.1) -- (0,-0.1) node[below left] {$O$}; % Origin labeled as O
    % y-axis ticks and labels
    \foreach \y in {-2,-1,1,2} {
        \draw (0.1,\y) -- (-0.1,\y) node[left] {\y};
    }
    % Path
    \draw[black, line width=0.5mm] (0,0) -- (1,-2) -- (2,-1) -- (3,0) -- (4,1) -- (5,1) -- (6,-1) -- (7,0) -- (8,1) -- (9,2) -- (10,1) -- (11,1) -- (12,0);
    \foreach \x/\y in {0/0, 1/-2, 2/-1, 3/0, 4/1, 5/1, 6/-1, 7/0, 8/1, 9/2, 10/1, 11/1, 12/0} \draw[draw=black, fill=white] (\x,\y) circle (3pt); % Hollow circles
\end{tikzpicture}
\caption{The grand Motzkin path with air pockets which corresponds to $m = (-2, 1, 1, 1, 0, -2, 1, 1, 1, -1, 0, -1)$.}
\label{fig:motzkin_example}
\end{figure}

%In the following subsections, we detail the procedure for generating Gray codes for the combinatorial objects in each stage of constructing grand Motzkin paths with air pockets. 
%We then describe how to concatenate these listings to create a 2-Gray code for grand Motzkin paths with air pockets.

\subsection{Stage 1: Generating fixed weight Fibonacci words}

Given $d$ down-steps in a grand Motzkin path with air pockets, the first stage of our algorithm generates a 2-Gray code for binary strings $q$, where $q_i = 1$ indicates that position $i$ is a down-step, and $q_i = 0$ indicates an up-step or a horizontal step. 
Since grand Motzkin paths with air pockets have no consecutive down-steps, the string $q$ contains no consecutive 1s. 
A \emph{Fibonacci word} of order $p$, weight $k$, and length $n$ is a binary string of length $n$ with weight $k$ that avoids $p$ consecutive 1s. 
When $p = 2$, the set of Fibonacci words of weight $k$ and length $n$ corresponds to the set of binary strings of weight $k$ with no consecutive 1s, {which matches the set of strings $q$ in stage 1}  when we set $k = d$. 
In~\cite{Hassler24KMS,Hassler24GASCom,Hassler24RAIRO}, Hassler et al. developed efficient algorithms to generate greedy Gray codes for Fibonacci words of length $n$, order $p$, and weight $k$. 
Their recursive and greedy implementations generate each Fibonacci word in constant amortized time using $O(n^2)$ memory. 
The algorithm can be described recursively as follows when $p=2$ and $k=d$:
\begin{quote}
{Start} with $(10)^d 0^{n-2d}$. Greedily swap the leftmost possible $1$ with the leftmost possible $0$ before the next 1 and after the previous 1 (if there are any) such that the resulting string has not appeared before.
\end{quote}
Let \(\mathcal{F}_d(n)\) denote the Gray code listing of Fibonacci words of length $n$, order $p=2$, and weight $k=d$ generated by the algorithm in~\cite{Hassler24KMS,Hassler24GASCom,Hassler24RAIRO}.  %Let \(\mathcal{F}_d(n)\) denote the listing of strings of $q$ for grand Motzkin path with air pockets of length $n$ with $d$ down-steps generated by the algorithm in~\cite{Hassler24KMS,Hassler24GASCom,Hassler24RAIRO}.
For example, the algorithm generates the following listing \(\mathcal{F}_2(7)\) for Fibonacci words of weight $k=d=2$, length $n=7$, and order $p=2$:
\begin{center}
1010000, 1001000, 0101000, 0100100, 1000100, 
\\0010100, 0010010, 1000010, 0100010, 0001010, 
\\0001001, 1000001, 0100001, 0010001, 0000101.
\end{center}
This listing \(\mathcal{F}_2(7)\) corresponds to all possible arrangements of down-steps in a grand Motzkin path with air pockets for $n = 7$ and $d = 2$.

\begin{lemma}\label{lemma-start}~\cite{Hassler24KMS,Hassler24GASCom,Hassler24RAIRO}
The listing \(\mathcal{F}_d(n)\) always starts with the string $(10)^d 0^{n-2d}$ and ends with the string $ 0^{n-2d}(01)^d$. 
\end{lemma}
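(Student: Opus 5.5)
The first claim, that $\mathcal{F}_d(n)$ starts with $(10)^d0^{n-2d}$, is built into the algorithm: the greedy procedure is initialized with this string. So the work is in the second claim, and my plan is to read the greedy rule as a recursive description and then induct on $d$ and $n$.

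\textbf{Recursive description.} I will show that the listing coincides with the following construction. Write the first block as $10$ followed by a word of length $n-2$ and weight $d-1$. Then move the first $1$ rightward step by step. For each position $j$ of the leading $1$ (so the prefix is $0^{j-1}10$), the suffix of length $n-j-1$ ranges over a Fibonacci listing of weight $d-1$. Consecutive sub-listings are traversed alternately forwards and backwards, so that each transition is a single swap of the leading $1$ into an adjacent $0$ region. Inspecting $\mathcal{F}_2(7)$ suggests the actual ordering interleaves more subtly, with the leading $1$ revisiting earlier positions as the later $1$'s advance. So I would instead identify the recursion by conditioning on the position of the \emph{last} $1$.

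\textbf{Recursion on the last $1$.} The block structure in $\mathcal{F}_2(7)$ fits this view:
\begin{itemize}
\item the strings whose last $1$ is at position $4$;
\item then those with the last $1$ at position $5$, namely $0100100$, $1000100$, $0010100$;
\item and so on, up to the last $1$ at position $7$.
\end{itemize}
So I would prove by induction the stronger statement that $\mathcal{F}_d(n)$ is the concatenation, over $\ell=2d-1,\dots,n$, of blocks $B_\ell = \mathcal{G}_\ell \cdot 1\,0^{n-\ell}$. Here $\mathcal{G}_\ell$ is some listing of Fibonacci words of length $\ell-2$ and weight $d-1$ followed by a $0$, and the greedy rule determines it. The final block has $\ell=n$, so the last string ends in $1$ and its prefix of length $n-2$ is the last string of the weight-$(d-1)$ listing there.

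\textbf{Induction and the obstacle.} I would then show that within the final block the last string has the $d-1$ remaining $1$'s packed as far right as possible, i.e. $0^{n-2d}(01)^{d-1}$ on the prefix. This yields $0^{n-2d}(01)^d$. To carry it out, I would show by induction on $d$ that each block $B_\ell$ ends with the prefix maximally right-packed, $0^{\ell-2d}(01)^{d-1}0$. This is also the natural exit point, since from there the only unused moves are sliding the last $1$ to position $\ell+1$. The main obstacle is justifying that the greedy rule exhausts each block before advancing the last $1$, i.e. that a move of the last $1$ is never available as a ``leftmost'' option while prefix moves remain. Since this lemma is quoted from \cite{Hassler24KMS,Hassler24GASCom,Hassler24RAIRO}, I would take the recursive characterization proved there, namely that the greedy listing equals their recursive listing, as given, and only verify the endpoint from the recursion.
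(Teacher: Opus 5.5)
Your first claim is fine, since the greedy is initialized at $(10)^d0^{n-2d}$. For the second claim the paper gives no argument at all: it imports the lemma from Hassler et al. Your final fallback to those works is therefore the same as the paper's treatment. The self-contained induction you sketch before that fallback, however, has a genuine gap.

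The top-level decomposition by the position $\ell$ of the last $1$ is correct in the examples. The problem is that the blocks are not copies of the weight-$(d-1)$ listing, so the lemma for smaller $d$, used as your inductive hypothesis, says nothing about them. For $\ell\ge 2d+1$, block $\ell$ is entered from the end of block $\ell-1$. Its first prefix of length $\ell-2$ is therefore $0^{\ell-1-2d}(01)^{d-1}0$, not $(10)^{d-1}0^{\ell-2d}$; in $\mathcal{F}_3(7)$, for instance, block $\ell=7$ starts at $01010$ rather than $10100$.

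Worse, the structure you want to propagate fails one level down. In $\mathcal{F}_3(7)$, block $\ell=7$ has prefixes $01010, 10010, 10100, 10001, 01001, 00101$. Their own last $1$ sits at positions $4,4,3,5,5,5$, which is not increasing: from $1001001$ the leftmost-$0$ rule moves the inner last $1$ back from position $4$ to position $3$. So ``blocks in increasing order of the last $1$'' is not an invariant you can iterate.

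You have also placed the obstacle in the wrong spot. That the last $1$ is never moved while an earlier $1$ still has an unvisited legal move is automatic, because the greedy tries the $1$s from left to right. What actually has to be shown is twofold:
\begin{enumerate}
\item the earlier $1$s never run out of unvisited moves before every prefix in the block has been visited;
\item they run out exactly at $0^{\ell-2d}(01)^{d-1}$.
\end{enumerate}
This is a Hamiltonicity-plus-endpoint statement for the greedy started at a nonstandard word. Proving it needs a strengthened inductive hypothesis covering such starts, or the explicit recursive description from the cited papers. You never state that recursion, so ``verify the endpoint from the recursion'' has no content. As written, your second claim rests entirely on the citation, which is acceptable only in the sense that the paper does the same.
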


\begin{lemma}\label{lem:gray-Fd}~\cite{Hassler24KMS,Hassler24GASCom,Hassler24RAIRO}
Consecutive strings in \(\mathcal{F}_d(n)\) differ by at most two bit changes. 
\end{lemma}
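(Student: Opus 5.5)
The listing $\mathcal{F}_d(n)$ is defined greedily: from the current string, swap the leftmost possible $1$ with the leftmost possible $0$ lying strictly between its neighbouring $1$s, such that the result is new. The key observation is that every such move is a single transposition of a $1$ and a $0$. It therefore changes exactly two bit positions and leaves the weight $d$ unchanged. So the Gray code property is almost immediate once we know two further facts. First, every successor in the listing is produced by such a swap and not by some other rule. Second, the swap respects the no-consecutive-$1$s condition, so that the listing stays inside the set of Fibonacci words.

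I would formalize this with the recursive description from~\cite{Hassler24KMS,Hassler24GASCom,Hassler24RAIRO}, arguing by induction on $n$ and $d$. The listing decomposes according to the position of the last $1$. The words ending in $0$ form a listing of $\mathcal{F}_d(n-1)$, possibly reversed, with a $0$ appended. The words ending in $01$ form a listing of $\mathcal{F}_{d-1}(n-2)$, possibly reversed, with $01$ appended. More generally, blocks of the form $\mathcal{F}_{d-1}(j)\,0^{n-j-1}1$ appear, taken alternately forward and reversed.

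Inside each block, consecutive strings differ in at most two bits by the induction hypothesis, since appending a fixed suffix does not change Hamming distance. Reversal also preserves the property, because Hamming distance is symmetric.

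The main step is checking the boundaries between consecutive blocks. Here I would use Lemma~\ref{lemma-start}, applied to the smaller parameters, which tells us that each sublisting starts with $(10)^{d'}0^{\cdot}$ and ends with $0^{\cdot}(01)^{d'}$. At a junction, the last string of one block and the first string of the next share the same $d-1$ leading $1$s in a common extremal configuration, and they differ only in the position of the final $1$. One $1$ moves from position $j$ to position $j'$, which is exactly two bit changes. For example, in $\mathcal{F}_2(7)$ the step $0101000 \to 0100100$ stays inside a block, while $1000100\to 0010100$ crosses a block boundary.

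I expect the main obstacle to be the parity bookkeeping: showing that the orientation of each block, forward or reversed, is chosen so that the endpoints actually match up. I would handle this by strengthening the induction hypothesis to carry both endpoints, as given by Lemma~\ref{lemma-start}, and checking that the alternation of orientations makes adjacent endpoints agree on everything except the moved $1$. Since the lemma is cited from Hassler et al., one may alternatively defer to their proof, which verifies exactly this.
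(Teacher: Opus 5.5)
Your first paragraph is already a complete proof. The two ``further facts'' you say must still be established hold by definition. $\mathcal{F}_d(n)$ is defined as the output of the greedy rule, so each word is obtained from its predecessor by exchanging one $1$ with one $0$. A ``possible'' exchange is, by definition, one that yields an unvisited word with no two adjacent $1$s. Hence consecutive words differ in exactly two positions. The paper offers nothing beyond the citation here. What is genuinely nontrivial in Hassler et al.\ is that the greedy rule does not get stuck before listing all $\binom{n-d+1}{d}$ words, and that it ends at $0^{n-2d}(01)^d$ (Lemma~\ref{lemma-start}); this lemma needs neither.

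The inductive ``formalization'' you propose instead would fail, because the block structure it rests on is false. Let block $j$ be the words whose last $1$ is at position $j$, i.e.\ the words $p\,0\,1\,0^{n-j}$ with $p\in\mathcal{F}_{d-1}(j-2)$. In the paper's listing of $\mathcal{F}_2(7)$, block $5$ has prefixes $010,100,001$ and block $7$ has prefixes $00010,10000,01000,00100,00001$. Neither is $\mathcal{F}_1(3)$ (resp.\ $\mathcal{F}_1(5)$) or its reversal. Only your split ``words ending in $0$ $=\mathcal{F}_d(n-1)\cdot 0$'' holds there.

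No choice of orientations can repair this. Going from the last word $q\,0\,1\,0^{n-j}$ of block $j$ to the first word $p\,0\,1\,0^{n-j-1}$ of block $j+1$, the moving last $1$ already costs two bits, so you need $p=q0$ exactly. A reversed block $j+1$ starts with $p=0^{j+1-2d}(01)^{d-1}$ by Lemma~\ref{lemma-start}, which ends in $1$ when $d\ge 2$. So block $j+1$ must be forward. Then $q=(10)^{d-1}0^{j-2d}$ is the \emph{first} word of $\mathcal{F}_{d-1}(j-2)$, so block $j$ must be reversed. The same argument forbids that whenever $j\ge 2d$, since block $j$ then has a predecessor block. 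This uses only endpoints, so it applies to either notion of reversal.

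Hence, for $d\ge 2$ and $n\ge 2d+1$, no listing of the shape you describe is a $2$-Gray code. The ``parity bookkeeping'' you anticipate is an actual obstruction: in the true greedy listing, block $j+1$ starts at $q0$, which is generally neither endpoint of $\mathcal{F}_{d-1}(j-1)$. Your illustration is also backwards: $0101000\to0100100$ crosses from block $4$ to block $5$, while $1000100\to0010100$ stays inside block $5$. Drop the induction and argue directly from the greedy definition.
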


\begin{lemma}\label{lem:CATFd}~\cite{Hassler24KMS,Hassler24GASCom,Hassler24RAIRO}
The listing  \(\mathcal{F}_d(n)\) can be generated in constant amortized time per string, using $O(n^2)$ memory. 
\end{lemma}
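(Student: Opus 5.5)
This lemma is cited from Hassler et al., so the natural route is to check how their recursive construction specialises to $p=2$ and then argue amortized cost, without re-deriving the Gray code itself. I would write the recursion behind $\mathcal{F}_d(n)$ explicitly. A Fibonacci word of length $n$ and weight $d$ either ends in $0$, giving a length-$(n-1)$ weight-$d$ word followed by $0$, or ends in $01$, giving a length-$(n-2)$ weight-$(d-1)$ word followed by $01$. In the greedy listing the last $1$ moves rightwards through slots. So $\mathcal{F}_d(n)$ consists of blocks indexed by the position $j$ of the last $1$. Each block is a listing $\mathcal{F}_{d-1}(j-2)$ (possibly reversed), followed by $0 1 0^{n-j}$. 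Consecutive blocks are joined where the last $1$ moves by a swap.

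The generation algorithm mirrors this recursion. Each call fixes the position of the current last $1$ and recurses on the prefix, with the direction (forward or reversed) passed as a parameter. By Lemma~\ref{lemma-start} we know the first and last strings of every sublisting, so the transitions can be hard-coded as single swaps. Each swap is a two-bit change, and it can be performed in $O(1)$ time on a global array.

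The amortized bound uses the standard CAT argument on the computation tree. I would make sure every recursive call either outputs a string or has at least two children, and that each call does $O(1)$ work apart from its recursive calls. Then the number of nodes is $O(|\mathcal{F}_d(n)|)$. Degenerate branches, where the prefix length is less than $2(d-1)-1$ and no word exists, must be pruned before the call. Chains of calls with a single child occur when the prefix is forced to be $(10)^{d-1}$ or $(01)^{d-1}$. I would short-circuit these by outputting directly, or else charge the chain to the $\Theta(n)$ leaves that its sibling subtrees produce.

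For memory, the recursion depth is at most $n$. If each level keeps a local copy or state of length $O(n)$, which is needed to restore the prefix after a reversed sublisting, the total is $O(n^2)$.

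The main obstacle is the amortization. The concern is showing that pruned or single-child paths do not cost $\omega(1)$ per output string, especially when $n\approx 2d$ and the listing is tiny. I would handle this by a counting inequality of the form $|\mathcal{F}_d(n)|\ge c\cdot(\text{number of internal nodes})$, proved by induction on $n$ using $|\mathcal{F}_d(n)|=\binom{n-d+1}{d}$.
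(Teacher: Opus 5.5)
The paper gives no proof of this lemma; it imports it from Hassler et al. So your reconstruction has to stand on its own, and its load-bearing step fails. You claim that the block of $\mathcal{F}_d(n)$ in which the last $1$ sits at position $j$ is $\mathcal{F}_{d-1}(j-2)$ (possibly reversed) followed by $010^{n-j}$, and that consecutive blocks are joined by a single swap of the last $1$.

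The paper's own examples contradict the first part:
\begin{itemize}
\item In $\mathcal{F}_2(7)$ the block with last $1$ at position $5$ has prefixes $010,100,001$. The block at position $6$ has $0010,1000,0100,0001$. By contrast, $\mathcal{F}_1(3)=100,010,001$ and $\mathcal{F}_1(4)=1000,0100,0010,0001$, and neither block matches these or their reversals.
\item In $\mathcal{F}_3(7)$ the block at position $7$ has prefixes $01010,10010,10100,10001,01001,00101$. This is not $\mathcal{F}_2(5)$ reversed or mirrored.
\end{itemize}

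Worse, no choice of orientations makes your scheme a Gray code. If only the last $1$ moves from $j$ to $j+1$, the first prefix of block $j+1$ must equal the last prefix of block $j$ followed by a $0$. By Lemma~\ref{lemma-start}, the endpoints of $\mathcal{F}_{d-1}(m)$, and of its reversal or mirror image, are $(10)^{d-1}0^{m-2d+2}$ and $0^{m-2d+2}(01)^{d-1}$. Appending a $0$ turns only the first of these into an endpoint of the next size. So a block entered by a single swap starts at $(10)^{d-1}0^{*}$, hence ends at $0^{*}(01)^{d-1}$, and cannot be left by a single swap. Concretely, for $d\ge 2$ and $n\ge 2d+1$, block $2d$ must start at $(10)^{d-1}$ (the padded unique prefix of block $2d-1$), so it ends at $(01)^{d-1}$. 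But $(01)^{d-1}0$ is not an endpoint of $\mathcal{F}_{d-1}(2d-1)$ in any orientation. Your ``hard-coded single swaps'' therefore do not exist. The computation-tree count you sketch would be about a listing that is neither $\mathcal{F}_d(n)$ nor a 2-Gray code.

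What is missing is the true, history-dependent recursive structure of the greedy order. Inside each block, the prefix listing begins with the previous block's final prefix padded by a $0$. Roughly, the $1$s to the left of the one that just moved then jump back to their leftmost admissible slots and sweep rightwards, skipping slots already visited in the current round. You can check this against the data above. It is also what the visit-stamp table $p[i][j]$ in the appendix's \texttt{IntComp} class records, and that table is where the $O(n^2)$ memory comes from. Proving that the greedy rule yields exactly this structure, and that it can be traversed with $O(1)$ amortized work, is the content of the cited papers. Either establish that characterization first and then run your amortization on it, or invoke Hassler et al.\ directly as the paper does. Separately, your amortization step is only asserted: the charging of single-child chains and the inequality $|\mathcal{F}_d(n)|\ge c\cdot(\text{internal nodes})$ are not proved. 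That is secondary to the structural error.
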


We then define $\mathcal{F}(n)$ as the listing produced by concatenating the listings $\mathcal{F}_d(n)$ for $d$ ranging from $\lfloor \frac{n}{2} \rfloor$ down to 1. However, we reverse each string in $\mathcal{F}_d(n)$ individually (i.e., take its mirror image) when $d$ is even.
We also use the notation $\mathcal{S}^{R}$ to refer to the listing obtained by reversing each string in a listing $\mathcal{S}$. For example, $(001, 200, 010)^R = (100, 002, 010)$.
When $n = 7$, we have
%By reversing the order of $\mathcal{F}_2(7)$ (since $\lfloor \frac{7}{2} \rfloor - 2 = 1$ is odd), we obtain:
%\begin{itemize}
%\end{itemize}
\begin{align*}
\mathcal{F}_3(7) &= 1010100, 1010010, 1001010, 0101010, 0101001, 1001001, 1010001, \\
& \ \ \ \  1000101, 0100101, 0010101;\\
\mathcal{F}_2(7)^{R} &= 0000101, 0001001, 0001010, 0010010, 0010001, 0010100, 0100100, 0100001, \\
&= 0100010, 0101000, 1001000, 1000001, 1000010, 1000100, 1010000;\\
%\mathcal{F}_2(7)^{-1} &= 0000101, 0010001, 0100001, 1000001, 0001001, 0001010, 0100010, 1000010, \\ 
%& \ \ \ \  0010010, 0010100, 1000100, 0100100, 0101000, 1001000, 1010000;\\
\mathcal{F}_1(7) &= 1000000, 0100000, 0010000, 0001000, 0000100, 0000010, 0000001.
\end{align*}
%\begin{itemize}\raggedright
%  \setlength{\itemsep}{2pt} 
%  \setlength{\parskip}{0pt} 
%  \item \(\mathcal{F}_3(7)\): {1010100, 1010010, 1001010, 0101010, 0101001, 1001001, 1010001, 1000101, 0100101, 0010101;}
%  \item \(\mathcal{F}_2(7)^{-1}\): {0000101, 0010001, 0100001, 1000001, 0001001, 0001010, 0100010, 1000010, 0010010, 0010100, 1000100, 0100100, 0101000, 1001000, 1010000;}
%  \item \(\mathcal{F}_1(7)\): {1000000, 0100000, 0010000, 0001000, 0000100, 0000010, 0000001.}
%\end{itemize}
Thus, $\mathcal{F}(7)$ is as follows:
\begin{center}
1010100, 1010010, 1001010, 0101010, 0101001, 1001001, 1010001, 1000101, 0100101, 0010101,
0000101, 0001001, 0001010, 0010010, 0010001, 0010100, 0100100, 0100001, 
0100010, 0101000, 1001000, 1000001, 1000010, 1000100, 1010000, 
1000000, 0100000, 0010000, 0001000, 0000100,
0000010, 0000001.
\end{center}

\begin{lemma}\label{Fn-Gray}
Consecutive strings in $\mathcal{F}(n)$ differ by at most two bit changes. 
\end{lemma}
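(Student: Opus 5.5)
Since $\mathcal{F}(n)$ is a concatenation of blocks, I would split the consecutive pairs into two kinds. The first kind is a pair lying inside one block. The second kind is a pair straddling the boundary between $\mathcal{F}_d(n)$ (possibly reversed) and $\mathcal{F}_{d-1}(n)$ (possibly reversed).

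\textbf{Pairs inside a block.} For pairs inside a block $\mathcal{F}_d(n)$, Lemma~\ref{lem:gray-Fd} applies directly. When $d$ is even, the block is $\mathcal{F}_d(n)^R$, obtained by mirroring each string. Mirroring is a fixed permutation of positions, so the Hamming distance between two strings is unchanged. Hence consecutive strings in $\mathcal{F}_d(n)^R$ still differ in at most two bits.

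\textbf{Pairs at a boundary.} By Lemma~\ref{lemma-start}, $\mathcal{F}_d(n)$ starts with $(10)^d0^{n-2d}$ and ends with $0^{n-2d}(01)^d$. Mirroring gives the strings $0^{n-2d}(01)^d$ and $(10)^d0^{n-2d}$ respectively. So $\mathcal{F}_d(n)^R$ starts with $0^{n-2d}(01)^d$ and ends with $(10)^d0^{n-2d}$. Consecutive values of $d$ have opposite parity, so there are two cases at the junction between $d$ and $d-1$.

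\emph{Case $d$ odd.} The last string of block $d$ is $0^{n-2d}(01)^d$. Block $d-1$ is even, hence reversed, and its first string is $0^{n-2d+2}(01)^{d-1}$. Both strings end with $(01)^{d-1}$ and are zero to the left of that suffix, except that the first string has one extra $1$ in position $n-2d+2$. They therefore differ in exactly one bit.

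\emph{Case $d$ even.} Block $d$ is reversed and ends with $(10)^d0^{n-2d}$. The next block $\mathcal{F}_{d-1}(n)$ starts with $(10)^{d-1}0^{n-2d+2}$. These differ only in the $1$ at position $2d-1$, again a single bit change.

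The check against the $n=7$ example matches: $0010101 \to 0000101$ and $1010000 \to 1000000$. The degenerate case $d=1$ needs no separate treatment, since $\mathcal{F}_1(n)$ is simply the last block.

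\textbf{Main obstacle.} The only step needing care is the parity bookkeeping. I must confirm that the reversal rule (reverse exactly when $d$ is even), combined with the endpoints from Lemma~\ref{lemma-start}, always makes adjacent blocks meet at strings that share the same "side" of the pattern. Since $d$ and $d-1$ always have opposite parity, exactly one of the two adjacent blocks is mirrored. This is precisely what aligns the two endpoints, so both boundary cases give a one-bit change. Together with the within-block case, consecutive strings in $\mathcal{F}(n)$ differ in at most two bits.
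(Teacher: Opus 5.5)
Your proof is correct and follows the paper's argument: Lemma~\ref{lem:gray-Fd} handles pairs inside a block, and Lemma~\ref{lemma-start} shows that each block boundary only removes a single $1$. You are more careful than the paper in two places: you note explicitly that mirroring preserves Hamming distance, and you write out both boundary strings. Your observation that only the alternation of mirrored and unmirrored blocks matters also makes the argument robust to the paper's inconsistent reversal convention, since the $n=5$ example and Algorithm~\ref{alg:motzkin} leave the first block $d=\lfloor n/2\rfloor$ unmirrored rather than mirroring exactly the even $d$.
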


\begin{proof}
By Lemma~\ref{lem:gray-Fd}, consecutive strings within each \(\mathcal{F}_d(n)\) differ by at most two bit changes.  
When we concatenate the listings of \(\mathcal{F}_d(n)\) for \( d = 1 \) to \( \lfloor \frac{n}{2} \rfloor \), transitioning from the last string in \(\mathcal{F}_d(n)\) to the first string in \(\mathcal{F}_{d-1}(n)^{R}\) requires removing a 1 by Lemma~\ref{lemma-start}, which requires only one bit change. 
Similarly, transitioning from the last string in $  \mathcal{F}_d(n)^{R}  $ to the first string in $  \mathcal{F}_{d-1}(n)  $ requires only one bit change.  \qed
%By Lemma~\ref{}, consecutive strings in $\mathcal{F}_d(n)$ differ by at most two bits. 
%When we concatenate $\mathcal{F}_d(n)$ and $\mathcal{F}_{d-1}(n)$, we simply decrement the last 1 or the first 1, depending on the parity which only requires one bit change.  
%According to Lemma 2, the sequence $\mathcal{F}_d(n)$ ensures that transitions between consecutive strings require at most two bit changes. The sequence $\mathcal{F}_d(n)$ is ordered based on the parity of $\lfloor n/2 \rfloor - d$, reversing when odd and maintaining forward order when even, with parity alternating as $d$ decreases. By the algorithm~\cite{}, the first string of $\mathcal{F}_d(n)$ is derived from the last string of $\mathcal{F}_{d+1}(n)$ by flipping the leftmost 1 (for forward order) or rightmost 1 (for reverse order), preserving the consecutive strings in $F(n)$ differ by at most two bit changes.
\end{proof}

\begin{lemma}
The listing  $\mathcal{F}(n)$ can be generated in constant amortized time per string, using $O(n^2)$ memory. 
\end{lemma}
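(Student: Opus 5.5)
The plan is to reduce the claim to Lemma~\ref{lem:CATFd}, applied once for each weight $d$ from $\lfloor \frac{n}{2} \rfloor$ down to $1$, and then to bound separately the two costs this introduces: the reversal of strings when $d$ is even, and the switch between consecutive sublistings.

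\textbf{Reversal.} Reversing every string of $\mathcal{F}_d(n)$ never requires copying strings. The generation algorithm for $\mathcal{F}_d(n)$ changes at most two bits per step (Lemma~\ref{lem:gray-Fd}). When $d$ is even, we apply each bit update at position $i$ to position $n+1-i$ instead, which costs $O(1)$ per update. If the algorithm keeps auxiliary pointers to the leftmost or rightmost $1$, these are mirrored in the same way. Hence $\mathcal{F}_d(n)^R$ is produced in exactly the same amortized time as $\mathcal{F}_d(n)$, and with the same $O(n^2)$ memory.

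\textbf{Transitions between sublistings.} By Lemma~\ref{lemma-start} and the proof of Lemma~\ref{Fn-Gray}, the last string of one sublisting and the first string of the next differ in a single bit. So the current string itself can be updated in $O(1)$. However, the recursive procedure for $\mathcal{F}_{d-1}(n)$ may need to rebuild its internal data structures, namely recursion stacks or the $O(n^2)$ tables of Hassler et al. We charge this setup at most $O(n^2)$ per value of $d$; if the tables can be reused across weights, only $O(n)$ reinitialization is needed. Since there are at most $\lfloor n/2\rfloor$ weights, the total overhead is $O(n^3)$ in the worst case, and $O(n^2)$ with reuse.

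\textbf{Amortization.} Let $N_d=\binom{n-d+1}{d}$ be the number of Fibonacci words of length $n$ and weight $d$. The total work is
\[
\sum_{d=1}^{\lfloor n/2\rfloor} O(N_d) \;+\; \text{overhead}.
\]
The overhead is absorbed into the first term because $\sum_d N_d = F_{n+2}-1$ grows exponentially in $n$. Therefore the amortized cost per string is $O(1)$.

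\textbf{Memory.} The memory bound is $O(n^2)$ because only one sublisting is active at any time and its structures are reused or freed before the next weight begins.

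\textbf{Main obstacle.} The delicate step is the per-weight overhead. Summing the $O(n^2)$ memory over all $d$ would give $O(n^3)$, so the memory claim relies on reusing the structures rather than allocating them anew. The time claim, by contrast, survives even the crude $O(n^3)$ bound, since that is negligible against an exponential total. I would first check directly that the initial string $(10)^d0^{n-2d}$ and its associated state can be set up in $O(n)$ time. This would make even the finest accounting, weight by weight, work for every $d$ with $N_d \ge n$. Weights with small $N_d$ are handled by the global amortization above.
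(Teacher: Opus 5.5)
Your proposal is correct and follows the paper's route: invoke Lemma~\ref{lem:CATFd} for each weight $d$, and observe that producing the mirror-image strings costs nothing extra. The paper reverses the print order, which is excluded from the analysis; you mirror the $O(1)$ bit updates. You go further than the paper by explicitly absorbing the per-weight setup of the data structures (at most $O(n^3)$ in total) into the exponentially many strings, a point the paper's proof passes over. One harmless correction: for odd $n$ the sum $\sum_{d=1}^{\lfloor n/2\rfloor}\binom{n-d+1}{d}$ equals $F_{n+2}-2$ rather than $F_{n+2}-1$, because the word $(10)^{(n-1)/2}1$ is excluded.
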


\begin{proof}
As standard for generation algorithms, the time required to output a string is not included in the analysis.
By Lemma~\ref{lem:CATFd}, each string in the listings $\mathcal{F}_d(n)$ can be generated in constant amortized time per bit, using $O(n^2)$ memory.
The strings in $\mathcal{F}_d(n)^R$ can be produced by simply reversing the print order of each string generated in $\mathcal{F}_d(n)$, and they can clearly be generated in constant amortized time per bit as well, using $O(n^2)$ memory.
\end{proof}

\subsection{Stage 2: Generating binary strings with minimum weight}

In the second stage of our algorithm, we generate a 2-Gray code for the up-steps and horizontal steps, corresponding to the positions in $q$ where $q_i = 0$. 
If a grand Motzkin path has $d$ down-steps (i.e., $d$ 1s in $q$), it must have at least $d$ up-steps to return to the $x$-axis. 
Then when we represent an up-step as 1 and a horizontal step as 0, the set of up-steps and horizontal steps in a grand Motzkin path with air pockets and $d$ down-steps corresponds to the set of binary strings $r$ of length $n-d$ with at least $d$ 1s.

Given $d$ down-steps from the first stage, the second stage generates a 2-Gray code for the set of binary strings $r$ with at least $d$ 1s. 
There are many algorithms that generate 2-Gray codes for binary strings with at least $d$ 1s~\cite{Art4,mutze2023combinatorial,ruskey1996combinatorial,Sav97}. 
For example, {Sawada} et al.~\cite{10.1007/978-3-030-85088-3_15,flipswap} extend the binary reflected Gray code algorithm to generate 2-Gray codes for flip-swap languages. 
A \emph{flip-swap language} with respect to 0 is a set $S$ of binary strings of length $n$ such that $S \cup {1^n}$ is closed under two operations (when applicable): (1) flipping the leftmost 0, and (2) swapping the leftmost 0 with the bit to its right.
\begin{lemma} \cite{10.1007/978-3-030-85088-3_15,flipswap}
The set of binary strings with at least $d$ 1s forms a flip-swap language with respect to 0.
\end{lemma}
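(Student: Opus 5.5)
Let $S_d$ denote the set of binary strings of length $m = n-d$ with at least $d$ 1s. The plan is to check directly that $S_d \cup \{1^m\}$ is closed under the two flip-swap operations. We fix an arbitrary string $\alpha \in S_d \cup \{1^m\}$ and treat each operation in turn. In each case the goal is to show that the image $\alpha'$ again has weight at least $d$. Both operations are only applicable when $\alpha$ contains a 0, so we may assume $\alpha \neq 1^m$; then $\alpha \in S_d$ and $|\alpha| \geq d$.

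\emph{Operation (1), flipping the leftmost 0.} This turns a 0 into a 1, so $|\alpha'| = |\alpha| + 1 \geq d+1 > d$. Hence $\alpha' \in S_d$, or $\alpha' = 1^m$ if $\alpha$ had a single 0. Either way $\alpha'$ lies in $S_d \cup \{1^m\}$.

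\emph{Operation (2), swapping the leftmost 0 with the bit to its right.} Let the leftmost 0 be at position $i$, with $i < m$ so the operation is applicable.
\begin{itemize}
\item If $\alpha_{i+1} = 0$, the swap leaves the string unchanged.
\item If $\alpha_{i+1} = 1$, the swap only permutes symbols.
\end{itemize}
In both subcases $|\alpha'| = |\alpha| \geq d$, so $\alpha' \in S_d$.

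Since both operations preserve membership in $S_d \cup \{1^m\}$, the set $S_d$ is a flip-swap language with respect to 0. No step here is a real obstacle. The only point requiring care is the convention that $1^m$ is adjoined to the set and that the operations are applied only "when applicable". This handles the boundary cases where $\alpha$ has no 0, or where the leftmost 0 is the last bit. The underlying principle is that a flip-swap language with respect to 0 is any set closed upward under weight-nondecreasing moves of this kind, and a lower bound on weight is exactly such a condition. This matches the cited result of Sawada et al.
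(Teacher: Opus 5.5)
Your argument is correct: flipping the leftmost 0 raises the weight by one and the swap preserves the weight, so $S_d \cup \{1^m\}$ is closed under both operations whenever they apply. The paper gives no proof of its own and only cites Sawada et al., and this direct check against the definition is the standard way such weight-bounded sets are shown to be flip-swap languages, so your route matches the intended one.
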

Thus, we can use the binary reflected Gray code algorithm from~\cite{10.1007/978-3-030-85088-3_15,flipswap} to generate a 2-Gray code for the set of binary strings $r$ with at least $d$ 1s. 
For example, the algorithm generates the following 2-Gray code for binary strings of length 5 with at least two 1s:
\begin{center}
11111, 01111, 00111, 10111, 10011, 00011, 01011, 
\\11011, 11001, 01001, 10001, 10101, 00101, 01101, \\11101, 11100, 01100, 10100, 11000, 11010, 01010, \\10010, 10110, 00110, 01110, 11110.  \ \ \ \ \ \ \ \ \ \ \ \ \ \ \ \ \ \ \ \ 
\end{center}
Let $\mathcal{B}_d(t)$ denote the listing of strings of $r$ for grand Motzkin path with air pockets of length $t$ with $d$ down-steps generated by the algorithm in~\cite{10.1007/978-3-030-85088-3_15,flipswap}. 

  \begin{lemma}\label{BnFull}~\cite{10.1007/978-3-030-85088-3_15,flipswap}
The listing $\mathcal{B}_d(t)$ always starts with the string $1^t$ and ends with the string $1^{t-1}0$. 
\end{lemma}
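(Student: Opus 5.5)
The plan is to prove the statement directly from the recursive definition of the binary reflected Gray code, by showing that $\mathcal{B}_d(t)$ is a filtered sublist of a suitably transformed BRGC. The idea of~\cite{10.1007/978-3-030-85088-3_15,flipswap} is that for a flip-swap language with respect to $0$, the BRGC order restricted to the language is a 2-Gray code. Here the roles of the bits are transformed.

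Comparing with the displayed example ($11111, 01111, 00111, 10111, \ldots, 11110$), I would identify $\mathcal{B}_d(t)$ as follows. Complement every bit of the standard BRGC of length $t$ and reverse every string; write $\overline{\mathcal{G}}(t)$ for the result. Then $\mathcal{B}_d(t)$ is obtained from $\overline{\mathcal{G}}(t)$ by keeping only strings with at least $d$ ones, in the same order. The first step is therefore to record this description and check it against the algorithm of~\cite{10.1007/978-3-030-85088-3_15,flipswap}. I expect this to be the main obstacle: the cited algorithm is stated as a recursive generation procedure rather than as a filter, so I would verify that its recursion mirrors the BRGC recursion. Concretely, it fixes the last bit, recurses, then flips the last bit and recurses in reverse, pruning subtrees whose strings cannot reach weight $d$.

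The second step is the standard fact about the BRGC itself. Define
\[
\mathcal{G}(1)=0,1, \qquad \mathcal{G}(n)=0\,\mathcal{G}(n-1),\; 1\,\mathcal{G}(n-1)^{\mathrm{rev}},
\]
where $\mathrm{rev}$ reverses the order of the list. By induction on $n$, $\mathcal{G}(n)$ begins with $0^n$ and ends with $10^{n-1}$. The first element is $0$ prepended to the first element $0^{n-1}$ of $\mathcal{G}(n-1)$. The last element is $1$ prepended to the first element $0^{n-1}$ of $\mathcal{G}(n-1)$, since reversing the list makes the first element last. Complementing and mirroring each string, $\overline{\mathcal{G}}(t)$ begins with $1^t$ and ends with $1^{t-1}0$.

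The final step is to note that filtering preserves relative order. Both $1^t$ and $1^{t-1}0$ lie in the language as long as $t-1\ge d$. They are respectively the first and last elements of the unfiltered list, so they remain the first and last elements of $\mathcal{B}_d(t)$. I would state the degenerate case $t=d$ (i.e.\ $n=2d$) separately: there the language is just $\{1^t\}$, the listing consists of the single string $1^t$, and the claim about the last string must be read with that convention. Alternatively, the lemma should be applied only when $t>d$, which is how Stage 2 will use it.
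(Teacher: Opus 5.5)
Your argument is correct. The paper gives no proof of this lemma, only a citation to the flip-swap papers, so what you supply is a self-contained derivation of what that citation asserts.

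Your first step is less of an obstacle than you expect. $\mathcal{B}_d(t)$ is by definition the cited BRGC-order listing restricted to the language. Your description (complement, mirror, filter) agrees with all three listings displayed in the paper. It also agrees with the routine \texttt{brgc\_k} in Appendix~A. That routine fixes the last bit first, puts $1$ before $0$ in the forward direction, and recurses into the reflected sublist. It stops early only when the remaining free positions are exactly enough to reach weight $d$, and then emits the unique completion. Pruning there is therefore the same as filtering that subtree, so the output order is the filtered order. Your endpoint computation for $\mathcal{G}(n)$ and the fact that filtering preserves order then finish the proof.

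What your route buys over the bare citation is that it exposes the hypothesis the statement actually needs. When $t=d$, which happens exactly when $n$ is even and $d=n/2$, the language is $\{1^d\}$. So $\mathcal{B}_d(d)$ is the single string $1^d$, and the claimed last string $1^{d-1}0$ is not even in the language. The lemma as stated is false in that case and should assume $t>d$. The defect is harmless for Theorem~\ref{GenMGray}: there the lemma is only used to match the last string of one $\mathcal{B}_d(n-d)$ block with the first string of the next, which is automatic for a one-string block.
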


\begin{lemma}\label{BnGray}~\cite{10.1007/978-3-030-85088-3_15,flipswap}
Consecutive strings in $\mathcal{B}_d(t)$ differ by at most two bit changes. 
\end{lemma}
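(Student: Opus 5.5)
The plan is to prove the statement by induction on $t$, using a recursive description of $\mathcal{B}_d(t)$ obtained from the recursive structure of the binary reflected Gray code. The listing $\mathcal{B}_d(t)$ is the BRGC restricted (filtered) to strings with at least $d$ ones. In the variant used here, the \emph{first} bits vary fastest, so the BRGC of length $t$ is the BRGC of length $t-1$ with $1$ appended, followed by the reversed BRGC of length $t-1$ with $0$ appended. (The example listing for $t=5$, $d=2$ has exactly this shape: its first half ends in $1$ and its second half ends in $0$.) Filtering commutes with concatenation and with reversal of a list. A string $\alpha 1$ has at least $d$ ones iff $\alpha$ has at least $d-1$ ones, and $\alpha 0$ has at least $d$ ones iff $\alpha$ does. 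Hence I would first establish
\[
\mathcal{B}_d(t) \;=\; \mathcal{B}_{d-1}(t-1)\cdot 1,\ \ \overline{\mathcal{B}_d(t-1)}\cdot 0 ,
\]
where the overline denotes the listing in reverse order. Here $\mathcal{B}_0(\cdot)$ is the full BRGC, and $\mathcal{B}_d(t-1)$ is empty when $d>t-1$.

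The base cases are immediate. If $d=0$, the listing is the full BRGC, a 1-Gray code. If $d=t$, the listing is the single string $1^t$. If $t=1$, everything is trivial.

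For the inductive step, consecutive strings inside the first block differ in at most two bits by the induction hypothesis applied to $\mathcal{B}_{d-1}(t-1)$, since appending a common last bit changes nothing. Inside the second block the same holds, because reversing the order of a list preserves the set of adjacent pairs.

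The only new adjacency is the junction between the blocks. By Lemma~\ref{BnFull} applied to $\mathcal{B}_{d-1}(t-1)$, the last string of the first block is $1^{t-2}0\cdot 1$. The first string of the second block is the last string of $\mathcal{B}_d(t-1)$ with $0$ appended, which by Lemma~\ref{BnFull} is $1^{t-2}0\cdot 0$. These differ in exactly one bit. When $d=t$ the second block is empty and there is no junction. When $d-1=t-1$ the first block is the single string $1^{t-1}1$ and the argument is unchanged. As a consistency check, the resulting listing starts with $1^{t-1}1=1^t$ and ends with the first string of $\mathcal{B}_d(t-1)$ followed by $0$, namely $1^{t-1}0$. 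This matches Lemma~\ref{BnFull}, so the endpoint information propagates through the induction.

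The main obstacle is the first step: justifying that the flip-swap generation algorithm of~\cite{10.1007/978-3-030-85088-3_15,flipswap}, applied to this language, really outputs the filtered BRGC in this last-bit-recursive form, with the correct orientation of the reflection. I would argue this from the cited characterization that for a flip-swap language the algorithm produces exactly the BRGC order restricted to the language. Then I would check the orientation against the $t=5$, $d=2$ listing above. The rest is routine induction.
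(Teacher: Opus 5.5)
Your route differs from the paper's. The paper gives no argument of its own: it imports the lemma from the flip-swap papers, where it is a special case of the general theorem that the BRGC order restricted to any flip-swap language is a 2-Gray code, combined with the fact that strings with at least $d$ ones form such a language. Your direct induction on $\mathcal{B}_d(t)=\mathcal{B}_{d-1}(t-1)\cdot 1,\ \overline{\mathcal{B}_d(t-1)}\cdot 0$ is self-contained and sound in outline. The decomposition agrees with the listings $\mathcal{B}_2(3)$, $\mathcal{B}_1(4)$ and the $t=5$, $d=2$ example. It is also exactly the recursion implemented by \texttt{Gen} inside \texttt{brgc\_k} in Appendix A: the last position is outermost and the $0$-branch is reflected. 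That settles what you call the main obstacle.

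There is a slip at the junction. When $d=t-1$, the listing $\mathcal{B}_{t-1}(t-1)$ is the single string $1^{t-1}$, so it does not end with $1^{t-2}0$. Lemma~\ref{BnFull} as stated only holds for $d<t$. The second block is then $1^{t-1}0$, and the junction $1^{t-2}01\to 1^{t-1}0$ is a two-bit change, not one. Examples are $101\to 110$ in $\mathcal{B}_2(3)$ and $01\to 10$ in $\mathcal{B}_1(2)$. The bound survives, but you need $d=t-1$ as its own case. Your clause about $d-1=t-1$ is just the $d=t$ case again. You should also carry the endpoint claim, restricted to $d<t$, inside the induction hypothesis rather than citing it unrestricted.

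Your approach can also give more than the citation does. Strengthen the hypothesis to: each consecutive pair differs by a single flip or by a transposition of a $0$ and a $1$. The junction is then either the flip $1^{t-2}01\to 1^{t-2}00$ or the adjacent transposition $1^{t-2}01\to 1^{t-1}0$, so the same induction goes through. This yields the finer property that the proof of Theorem~\ref{GenMGray} actually relies on, namely that $|\beta|$ and $|\beta'|$ differ by at most one. The bare statement ``at most two bit changes'' does not imply this. The cited route, in exchange, handles every flip-swap language at once.
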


\begin{lemma}~\cite{10.1007/978-3-030-85088-3_15,flipswap}
The listing  $\mathcal{B}_d(t)$ can be generated in constant amortized time per string, using $O(n^2)$ memory. 
\end{lemma}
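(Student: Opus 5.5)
This lemma is cited from the flip-swap papers, so the plan is to reconstruct the argument from the structure of the flip-swap generation algorithm rather than derive it from scratch.

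\textbf{The recursive algorithm.} The algorithm of~\cite{10.1007/978-3-030-85088-3_15,flipswap} filters the binary reflected Gray code. For a flip-swap language $S$ it can be run as a recursion on the length of the fixed suffix. At each node we hold a string $\alpha\beta$, where $\beta$ is fixed. The node branches only on extensions that stay inside $S\cup\{1^n\}$.

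\textbf{Step 1: membership test.} For the language ``at least $d$ 1s'', membership of a prefix completion is decided by one counter: the number of 1s fixed so far plus the number of free positions remaining. Each recursive call therefore updates an $O(1)$-size state (current weight, current position). A branch is pruned exactly when the remaining free positions cannot lift the weight to $d$.

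\textbf{Step 2: amortization.} This is the main obstacle, since constant amortized time needs every internal node of the recursion tree either to have at least two children or to belong to a chain of bounded length. I would argue as follows. Every non-pruned node has at least one child, because $S\cup\{1^n\}$ is closed under flipping the leftmost 0. Moreover, a node whose only child is forced to be a 1 leads immediately to a leaf. Concretely, if the remaining positions must all be 1, the completion is unique and can be written in a single assignment block. So the tree can be compressed so that every internal node has two children, and the number of nodes is $O(|S|)$. Each node does $O(1)$ work apart from output, which gives constant amortized time per string.

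\textbf{Step 3: memory.} The recursion depth is at most $t\le n$. Each frame stores $O(1)$ scalars, plus possibly a copy of the current string of length $O(n)$ if the implementation copies. The total memory is therefore $O(n)$, and in any case within the claimed $O(n^2)$. The Gray-code property and the endpoints are already given by Lemmas~\ref{BnFull} and~\ref{BnGray}, so only the time and space bounds need argument.
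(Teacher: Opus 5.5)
The paper gives no argument of its own for this lemma; it is quoted from the flip-swap papers. Your reconstruction therefore has to stand on its own, and it breaks at exactly the step you flagged.

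\textbf{The gap in Step~2.} You compress each forced chain into one leaf and say the forced completion ``can be written in a single assignment block''. Writing $1^{t'}$ costs $\Theta(t')$, not $O(1)$. Compression moves the cost of the chain into the leaf rather than removing it, so your claim that every node of the compressed tree does $O(1)$ work is false as stated.

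This cost does not amortize away either. Take minimum weight $d=t-1$, which occurs in \textsc{GenM} whenever $n=2d+1$. The listing then has only $t+1$ strings. The forced leaves are $1^{t-1-j}01^{j}$ for $0\le j\le t-1$, and their forced blocks have total length $\sum_{j}(t-1-j)=t(t-1)/2$. That is $\Theta(t)$ per string.

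You cannot charge this work to output. The lemma is about producing each string of $\mathcal{B}_d(t)$ in memory, and in \textsc{GenM} the string $\beta$ is actually consumed by $\mathcal{I}_d(|\beta|)$ and by \textsc{DecodeMotzkin}.

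\textbf{The fix.} What is needed is an invariant, not a counting argument. Keep every not-yet-fixed array position equal to $1$.
\begin{itemize}
\item The $1$-branch writes nothing.
\item The $0$-branch sets one bit to $0$ and restores it to $1$ on return.
\item A forced leaf therefore already holds $1^{t'}\beta$ and needs no writes.
\end{itemize}
With this invariant each node costs $O(1)$, since the weight counter and the BRGC direction bit are updated in constant time.

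The tree shape then gives the bound. A node with $t'+w>d$ has two non-empty children: the $0$-child has $(t'-1)+w\ge d$, and the $1$-child has $(t'-1)+(w+1)>d$. A node with $t'+w=d$ is a leaf. So the tree has fewer than $2|\mathcal{B}_d(t)|$ nodes, and the amortized cost per string is $O(1)$.

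\textbf{A smaller point on memory.} Your memory sentence contradicts itself. Copying an $O(n)$ string in each of $O(n)$ frames gives $O(n^2)$, not $O(n)$; without copying it is $O(n)$. Either way the stated $O(n^2)$ bound holds.
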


\subsection{Stage 3: Generating integer compositions}

The last stage of our algorithm involves generating all possible combinations of the $d$ down-step positions.
An \emph{integer composition} of a positive integer $s$ is a sequence $a_1, a_2, \ldots, a_k$ of positive integers such that $a_1 + a_2 + \cdots + a_k = s$.
In other words, an integer composition can be considered as an ordered collection of positive integers that sum to $s$.
Each term $a_i$ in the sequence is called a \emph{part}.
For example, the eight integer compositions of $s = 4$ are:
\begin{equation}
\label{list:composition}
4, \ \ 1 +3, \ \ 2 +2, \ \ 3 +1, \ \ 1 +1 +2, \ \ 1 +2 +1, 
\ \ 2 +1+1, \ \ 1 +1 +1 +1.
\end{equation}
Suppose that in stages one and two of our algorithm, we are given $d$ down-steps and $u$ up-steps. 
Then, in order for the grand Motzkin path to land on the $x$-axis, the $d$ down-steps must add up to $u$, which corresponds to all possible integer compositions of $u$ with $d$ parts.

%In~\cite{}, Shao et al. introduced a novel binary representation to represent integer partitions.
%In the proposed \emph{binary representation} for integer partitions, each integer partition of $n$ is represented by a binary string of length $n$.
%Each part $a_k$ of an integer partition is represented by the substring $0^{|a_k|-1}1$, and we concatenate all parts together to represent the integer partition.
%As an example, the seven integer partitions of the integer $n=5$ in~(\ref{list:partition}) can be represented by the following binary strings under our binary representation:

%\begin{center}
%$00001, 00011, 00101, 00111, 01011, 01111, 11111$.
%\end{center}
%Integer partitions can also be represented under the standard integer representation
%(also known as the natural representation), which lists each part as an integer.
%For example, the seven integer partitions of the integer $n=5$ in~(\ref{list:composition}) can be represented by the following lists of integers under the standard integer representation.
%$$\label{list:partition}
%5, \ \ 4 +1, \ \ 3 +2, \ \ 3 +1 +1, \ \ 2 +2 +1, \ \ 2 +1 +1 +1, \ \ 1 +1 +1 +1 +1.
%$$

We adopt a similar idea proposed in~\cite{Shao25} and introduce a
\emph{shorthand binary representation} to represent integer compositions.
In our proposed shorthand binary representation for integer compositions, each integer composition of $s$ is represented by a binary string of length $s-1$.
Each part $a_k$ of an integer composition is represented by the substring $0^{|a_k|-1}1$, except for the last part, which is represented by $0^{|a_k|-1}$, and we concatenate all parts together to represent the integer composition.
As an example, the eight integer compositions of the integer $s=4$ in~(\ref{list:composition}) can be represented by the following binary strings under our shorthand binary representation:
\begin{center}
$000, 100, 010, 001, 110, 101, 011, 111$.
\end{center}
Integer compositions of $s$ with $k$ parts can thus be represented by binary strings of length $s-1$ with weight $k-1$.

\begin{lemma}
The number of integer compositions of $s$ with $k$ parts is $\binom{s - 1}{k - 1}$. 
\end{lemma}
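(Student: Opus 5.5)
The plan is to prove the count through the shorthand binary representation just introduced, by showing it is a bijection between integer compositions of $s$ with $k$ parts and binary strings of length $s-1$ and weight $k-1$. Since there are exactly $\binom{s-1}{k-1}$ binary strings of length $s-1$ with $k-1$ ones (choose the positions of the ones), the lemma follows at once.

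\textbf{Well-definedness.} Given a composition $a_1 + \cdots + a_k = s$, its image is
\[
0^{a_1-1}1\,0^{a_2-1}1 \cdots 0^{a_{k-1}-1}1\,0^{a_k-1}.
\]
This string has length $\sum_i (a_i - 1) + (k-1) = s - k + k - 1 = s-1$. It contains exactly one $1$ for each of the first $k-1$ parts, so its weight is $k-1$.

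\textbf{Inverse map.} Given a binary string $b$ of length $s-1$ and weight $k-1$, the $k-1$ ones cut $b$ into $k$ maximal (possibly empty) blocks of zeros. Let the lengths of these blocks be $z_1,\dots,z_k \ge 0$. Set $a_i = z_i + 1 \ge 1$. Then
\[
\sum_i a_i = \sum_i z_i + k = (s-1-(k-1)) + k = s,
\]
so $(a_1,\dots,a_k)$ is a composition of $s$ with $k$ parts.

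\textbf{Mutual inverses.} Encoding this composition reproduces $b$ block by block. Conversely, decoding the encoding of a composition recovers each $a_i$, because the ones in the encoding sit exactly at the boundaries between parts.

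\textbf{Main obstacle.} The only delicate point is the special treatment of the last part, which carries no trailing $1$. This is what makes the length $s-1$ rather than $s$, and the weight $k-1$ rather than $k$. Once the inverse is phrased as cutting at ones into $k$ blocks, with empty blocks allowed, the bookkeeping is routine. The edge case $k=1$, with the single all-zero string, is covered automatically. As a sanity check, the eight strings listed above for $s=4$ are exactly all binary strings of length $3$.
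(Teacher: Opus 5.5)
Your proof is correct and follows the paper's route: the paper gives no separate proof and places this lemma immediately after observing that the shorthand binary representation identifies compositions of $s$ with $k$ parts with binary strings of length $s-1$ and weight $k-1$. Your write-up makes that bijection and its inverse explicit (including the length and weight bookkeeping for the last part and the $k=1$ case), which the paper leaves implicit.
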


A Gray code is said to be \emph{homogeneous} when the bits between the swapped 0 and 1 are all 0s.
We now prove that a homogeneous 2-Gray code for integer compositions under our shorthand binary representation corresponds to a 2-Gray code for integer compositions under the standard integer representation.

\begin{lemma}\label{composition}
The set of strings that form a homogeneous 2-Gray code for integer compositions in shorthand binary representation, when converted to standard integer representation, forms a 2-Gray code for integer compositions in that representation.
%The set of strings forming a homogeneous 2-Gray code for integer compositions in the shorthand binary representation, when expressed in the standard integer representation, forms a 2-Gray code for integer compositions in the standard integer representation.
\end{lemma}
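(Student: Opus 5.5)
The plan is to analyse a single homogeneous transposition directly and read off its effect on the parts. Fix two consecutive strings $\beta$ and $\beta'$ of length $s-1$ and weight $k-1$ in the homogeneous 2-Gray code. Since both have the same weight and differ in at most two bits, either $\beta=\beta'$ or $\beta'$ is obtained from $\beta$ by swapping a $1$ at some position $i$ with a $0$ at some position $j$. Homogeneity says that every bit strictly between $i$ and $j$ is $0$. By symmetry we may assume $i<j$, so that the 1 moves right; the case $i>j$ is the same argument read in reverse.

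Next, interpret the shorthand representation in terms of part boundaries. The $1$s of $\beta$ mark the cut points between consecutive parts. If the $1$s occur at positions $p_1<p_2<\cdots<p_{k-1}$, set $p_0=0$ and $p_k=s$. Then the $\ell$-th part is
\[
a_\ell=p_\ell-p_{\ell-1}.
\]
The last part needs separate attention, because it carries no terminating $1$. Convention $p_k=s$ handles it: the final part is $0^{a_k-1}$, whose length is $s-1-p_{k-1}$, which gives $a_k=s-p_{k-1}$.

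Suppose the moved $1$ is the $\ell$-th one, at $p_\ell=i$. Because all bits strictly between $i$ and $j$ are $0$, no other $1$ lies in $(i,j]$. Hence after the swap the cut points are still increasing and only $p_\ell$ changes, becoming $j$, with $p_{\ell-1}<j<p_{\ell+1}$. This is the key step, and the only place where homogeneity is used. Without it, the moved $1$ could jump over other $1$s and reorder the cut points, changing many parts at once.

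With only $p_\ell$ changed, exactly two parts are affected: $a_\ell$ increases by $j-i$ and $a_{\ell+1}$ decreases by $j-i$, while all other parts stay the same. When $\ell=k-1$, the part $a_{\ell+1}$ is the last part, and the formula $a_k=s-p_{k-1}$ covers this case. Therefore consecutive compositions in standard integer representation differ in at most two parts, so the listing is a 2-Gray code there.

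The main obstacle is being careful about the bookkeeping for the final part and confirming that the homogeneity hypothesis prevents the swap from crossing other $1$s. Once that is settled, the remainder is immediate.
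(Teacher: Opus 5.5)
Your proof is correct and takes the same approach as the paper's: a homogeneous swap moves a single $1$ without crossing any other $1$, so only one cut point $p_\ell$ changes, and therefore only the two adjacent parts $a_\ell$ and $a_{\ell+1}$ change. Your version makes the paper's brief argument explicit, through the cut-point encoding $a_\ell=p_\ell-p_{\ell-1}$ with the convention $p_k=s$ for the last part, and it pins down exactly where homogeneity is used.
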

\begin{proof}
The detailed proof is provided in Appendix C. 
\end{proof}

There are many algorithms that generate homogeneous 2-Gray codes for binary strings of length $n$ with $k$ 1s.
We again adopt the algorithm in~\cite{Hassler24KMS,Hassler24GASCom,Hassler24RAIRO} to generate a homogeneous 2-Gray code for shorthand binary representations of integer compositions.
For example, the algorithm generates the following listing when $n=7$ and $k=2$.
\begin{center}
1100000, 1010000, 0110000, 0101000, 1001000, 0011000, 0010100,
\\1000100, 0100100, 0001100, 0001010, 1000010, 0100010, 0010010,
\\0000110, 0000101, 1000001, 0100001, 0010001, 0001001, 0000011.
\end{center}
The corresponding 2-Gray code for integer compositions are
\begin{center}
1+1+6, \ \  1+\underline{2}+\underline{5}, \ \  \underline{2}+\underline{1}+5, \ \  2+\underline{2}+\underline{4}, \ \  \underline{1}+\underline{3}+4, \ \ \underline{3}+\underline{1}+4, \ \ 3+\underline{2}+\underline{3},
\\\underline{1}+\underline{4}+3, \ \  \underline{2}+\underline{3}+3, \ \  \underline{4}+\underline{1}+3, \ \  4+\underline{2}+\underline{2}, \ \  \underline{1}+\underline{5}+2, \ \ \underline{2}+\underline{4}+2, \ \ \underline{3}+\underline{3}+2,
\\\underline{5}+\underline{1}+2, \ \  5+\underline{2}+\underline{1}, \ \  \underline{1}+\underline{6}+1, \ \  \underline{2}+\underline{5}+1, \ \  \underline{3}+\underline{4}+1, \ \ \underline{4}+\underline{3}+1, \ \ \underline{6}+\underline{1}+1.
\end{center}
The changes on each consecutive integer partitions has been underlined. 
Let $\mathcal{I}_d(s)$ denote the listing of integer compositions of $s$ with $d$ parts under the standard integer representation, generated by the algorithm in~\cite{Hassler24KMS,Hassler24GASCom,Hassler24RAIRO}.
\begin{lemma}\label{IsStart}\cite{Hassler24KMS,Hassler24GASCom,Hassler24RAIRO}
The listing $\mathcal{I}_d(s)$ always starts with the integer composition $\underbrace{1 + \dots + 1}_{d-1} + (s-d+1)$ and ends with the composition $(s-d+1) + \underbrace{1 + \dots + 1}_{d-1}$.
\end{lemma}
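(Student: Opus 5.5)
The plan is to reduce Lemma~\ref{IsStart} to the endpoint behaviour of the underlying Fibonacci-word listing, using Lemma~\ref{lemma-start} together with the shorthand binary representation. By construction, $\mathcal{I}_d(s)$ is the image, under the shorthand-to-standard conversion, of the homogeneous Gray code produced by the algorithm of~\cite{Hassler24KMS,Hassler24GASCom,Hassler24RAIRO} for binary strings of length $s-1$ and weight $d-1$. Throughout, strings are read as shorthand encodings of compositions: the parts are the lengths of the runs delimited by the 1s, each incremented by one, and the last part is the final run of 0s plus one. So it suffices to identify the first and last strings of that binary listing and decode them.

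First I would pin down the starting string. The greedy rule for order $p$ starts from the lexicographically largest admissible string, in which all 1s are packed to the left. Here consecutive 1s are permitted (as the displayed listing starting with $1100000$ shows), so the first string is $1^{d-1}0^{s-d}$. Decoding, each of the first $d-1$ ones closes a part of size $1$, and the trailing block $0^{s-d}$ gives a last part of size $s-d+1$. This is exactly $1+\dots+1+(s-d+1)$.

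Next I would pin down the final string, which should be $0^{s-d}1^{d-1}$; the example ends with $0000011$. Decoding, the block $0^{s-d}1$ gives a first part of size $s-d+1$, each remaining $1$ gives a part of size $1$, and the empty trailing run gives a last part of size $1$. That yields $(s-d+1)+1+\dots+1$ with $d-1$ ones, as claimed.

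The main obstacle is justifying that the greedy listing for general binary strings of fixed weight (rather than the $p=2$ Fibonacci case quoted in Lemma~\ref{lemma-start}) really starts at $1^{d-1}0^{s-d}$ and ends at $0^{s-d}1^{d-1}$. I would cite the analogous endpoint result from~\cite{Hassler24KMS,Hassler24GASCom,Hassler24RAIRO} with the order taken large enough that the restriction is vacuous. If a direct argument is needed, I would use the recursive description: the listing of strings with prefix $1$ comes first and the listing of strings with prefix $0$ comes last. Inducting on $s$ then shows that the final string pushes all 1s to the right.
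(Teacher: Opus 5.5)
Your main route is correct and is essentially what the paper does. The paper gives no argument for this lemma beyond the citation to Hassler et al. Your decoding under the shorthand representation is right: $1^{d-1}0^{s-d}$ becomes $1+\dots+1+(s-d+1)$ and $0^{s-d}1^{d-1}$ becomes $(s-d+1)+1+\dots+1$. For $d=1$ both endpoints are the single composition $s$, so your case reading $0^{s-d}1$ as a first block is harmless. The starting string needs no proof: it is simply the seed of the greedy algorithm, as the displayed listing beginning with $1100000$ shows. Only the final string has real content.

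Your fallback direct argument would not work, because its structural premise is false. The greedy listing is not a block of strings beginning with $1$ followed by a block beginning with $0$. The paper's own example for length $7$ and weight $2$ runs $1100000, 1010000, 0110000, 0101000, 1001000, 0011000, \dots$, so the first bits $1,1,0,0,1,0,\dots$ interleave throughout.

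What the examples actually show, both this one and $\mathcal{F}_2(7)$, is a grouping by the position of the \emph{rightmost} $1$. That position is weakly increasing along the listing, so the last block consists of the strings ending in $1$. Even with this corrected decomposition, a naive induction fails, because within a block the prefixes are not listed in the order of the smaller listing. The final block of the example has prefixes $000010, 100000, 010000, 001000, 000100, 000001$; it starts where the previous block left off, not at $100000$.

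To show the last prefix is $0^{s-d}1^{d-2}$, you would have to analyse the greedy choices inside blocks, and that is exactly the content of the cited result. So the fallback gives no independent proof, and the citation carries the whole argument, just as in the paper.
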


\begin{lemma}\label{IsGray}\cite{Hassler24KMS,Hassler24GASCom,Hassler24RAIRO}
Consecutive strings in $\mathcal{I}_d(s)$ differ by at most two symbol changes.
\end{lemma}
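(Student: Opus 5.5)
The plan is to derive Lemma~\ref{IsGray} from two ingredients: the homogeneity of the Fibonacci-word listing of~\cite{Hassler24KMS,Hassler24GASCom,Hassler24RAIRO}, and the translation result, Lemma~\ref{composition}.

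First, I would observe that $\mathcal{I}_d(s)$ is, by definition, the image under the shorthand binary representation of the listing produced by the algorithm of~\cite{Hassler24KMS,Hassler24GASCom,Hassler24RAIRO}. That listing consists of binary strings of length $s-1$ and weight $d-1$; no order restriction is needed, since $p$ exceeds the weight. The map between compositions of $s$ with $d$ parts and these strings is a bijection: each part $a_k$ is encoded as $0^{a_k-1}1$, and the last part as $0^{a_k-1}$. Hence $\mathcal{I}_d(s)$ lists every composition exactly once.

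Second, I would check that consecutive binary strings in this listing are related by a homogeneous swap of a single $1$ with a single $0$, with only $0$s between them. This follows from the greedy rule quoted in Stage~1: a $1$ is swapped with a $0$ lying after the previous $1$ and before the next $1$, so no other $1$ lies between the swapped positions. Combined with the 2-Gray property inherited from Lemma~\ref{lem:gray-Fd}, this shows the listing is a homogeneous 2-Gray code.

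Third, I would apply Lemma~\ref{composition}, which converts this into the required 2-Gray property in the standard representation. For intuition, the underlying argument is short. Moving a $1$ across a block of $0$s changes only the two run lengths adjacent to that $1$, that is, two consecutive parts $a_k, a_{k+1}$, and it preserves their sum. The case where the last part is involved works the same way, since the last part is just the final run of $0$s.

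The main obstacle is justifying homogeneity for every transition of the greedy algorithm, not just those illustrated. I would rely on the stated swap rule. If that rule is only informal, I would instead argue from the recursive description in~\cite{Hassler24KMS}.
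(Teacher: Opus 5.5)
Your proposal is correct and follows the route the paper takes implicitly: the paper cites this lemma, but it adopts the homogeneous fixed-weight Gray code of Hassler et al.\ for the shorthand binary representation and transfers it to the standard representation via Lemma~\ref{composition}, and your third step reproduces that lemma's argument (moving a 1 across a block of 0s changes only the two adjacent parts, whose sum is preserved). One small tidying point: Lemma~\ref{lem:gray-Fd} and the quoted greedy rule are stated for $p=2$ with starting string $(10)^d0^{n-2d}$, whereas the composition listing is the unrestricted one starting at $1^{d-1}0^{s-d}$; you do not actually need Lemma~\ref{lem:gray-Fd}, because a single homogeneous swap of a 1 with a 0 already changes at most two bits.
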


\begin{lemma}\cite{Hassler24KMS,Hassler24GASCom,Hassler24RAIRO}
The listing $\mathcal{I}_d(s)$ can be generated in constant amortized time per string using $O(n^2)$ space.
\end{lemma}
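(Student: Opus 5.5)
The statement to prove is the last lemma: $\mathcal{I}_d(s)$ can be generated in constant amortized time per string using $O(n^2)$ space. The plan is to reduce it to Lemma~\ref{lem:CATFd}-type guarantees for the fixed-weight binary listing of~\cite{Hassler24KMS,Hassler24GASCom,Hassler24RAIRO}. That listing is applied here to strings of length $s-1$ and weight $d-1$ with no order restriction (the unrestricted fixed-weight case, i.e.\ taking the order $p$ larger than the weight). By the shorthand binary representation, each composition of $s$ with $d$ parts corresponds bijectively to such a string, so $\mathcal{I}_d(s)$ is the image of that binary listing under the conversion map. Since $s \le n$, the binary generator runs in constant amortized time per string with $O(n^2)$ memory by the cited results. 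It then remains to show that the parts of the composition can be maintained alongside the binary string with only constant extra work per transition.

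The key step is bounding the cost of the conversion. We will not recompute the composition from scratch, which costs $O(s)$. Instead, we keep the array $(a_1,\dots,a_d)$ of parts and update it incrementally. We use that the code is homogeneous: each transition either flips one bit or swaps a 1 with a 0 across a block of 0s. By the argument of Lemma~\ref{composition}, such a swap moves a boundary between two adjacent parts $a_j, a_{j+1}$. A swap across $\ell$ zeros therefore changes $a_j$ by $\pm\ell$ and $a_{j+1}$ by $\mp\ell$. This takes $O(1)$ time once we know the index $j$ of the affected 1 and the distance $\ell$, and the binary generator already knows both positions when it performs the swap. To get $j$, we carry along the rank of each 1, which is maintained recursively in the same way the generator tracks positions. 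The endpoints given in Lemma~\ref{IsStart} fix the initial array as $1,\dots,1,s-d+1$, which is computed once in $O(d)$ time and is absorbed by the amortization.

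For space, the part array uses $O(d)=O(n)$ memory on top of the $O(n^2)$ of the binary generator, so the total stays $O(n^2)$. The main obstacle is confirming that the generator exposes, at constant cost, the positions of the swapped bits and the index $j$ of the moved 1. If the cited recursive implementation does not make these directly available, the fallback is to observe that the recursion fixes a prefix and recurses on a suffix. The index of the active 1 is then the recursion depth in terms of placed 1s, and it can be passed as a parameter. With that in place, each update to $(a_j,a_{j+1})$ takes $O(1)$ worst-case time, and the amortized bound follows from the binary listing's.
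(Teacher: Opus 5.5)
Your proposal is correct and follows the paper's route. The paper gives no separate proof: it cites Hassler et al., since $\mathcal{I}_d(s)$ is by definition the image of their fixed-weight binary listing (length $s-1$, weight $d-1$) under the shorthand representation. Your observation that homogeneity lets the parts $a_j,a_{j+1}$ be updated in $O(1)$ time, using the rank of the moved 1 that the generator already tracks, makes explicit the conversion cost the paper leaves unstated.
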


\subsection{Assembling the listings}

In previous subsections, we discussed efficient constructions for 2-Gray codes for the sequences $\mathcal{F}(n)$, $\mathcal{B}_d(t)$, and $\mathcal{I}_d(s)$. This subsection describes how to concatenate these listings to produce a 2-Gray code for grand Motzkin paths with air pockets. 
%\hl{Here, we define the \textit{rank} of a string (or listing element) as its position in the generated sequence, counted from 1 (i.e., the first element has rank 1, the second rank 2, etc.)}. 
The \emph{rank} of a string is its position in the generated sequence, starting from 1.
We also use the notation $\mathcal{S}^{-1}$ to refer to the
listing obtained by reversing a listing $\mathcal{S}$. 
Similarly, $\mathcal{S}^{-k}$ represents the listing obtained by reversing the listing $\mathcal{S}$ for $k$ times. 
Thus, when $k$ is even $\mathcal{S}^{-k} = \mathcal{S}$, and when $k$ is odd $\mathcal{S}^{-k}=\mathcal{S}^{-1}$. 
The construction involves the following steps:
\begin{enumerate}
\item Generate the strings in $\mathcal{F}(n)$;
\item For each string $w$ in $\mathcal{F}(n)$, let $d$ be the number of 1s in $w$. Generate the strings in $\mathcal{B}_d(n-d)$. If the rank of $w$ in  $\mathcal{F}(n)$ is odd, for each $v$ in $\mathcal{B}_d(n-d)$, produce a string $\ell$ by replacing the 0s in $w$ with the bits in $v$ and the 1s in $w$ with the symbol D. 
If the rank of $w$ in $\mathcal{F}(n)$ is even, reverse the whole listing $\mathcal{B}_d(n-d)$, that is $\mathcal{B}_d(n-d)^{-1}$; then produce $\ell$ similarly. For the even-rank case, 
%apply a fixed permutation to the bit positions
permute the symbols of each string in the same way in the listing so that 
%rearrange the column of the listing to 
 the first string of $\mathcal{B}_d(n-d)^{-1}$ matches the last string generated in the previous sublisting;
\item For each $\ell$ produced, let $s$ be the number of 1s in $v$. Generate the listing $\mathcal{I}_d(s)$. If the rank of $\ell$ is odd, replace the Ds in $\ell$ with the integer compositions from $\mathcal{I}_d(s)$. If the rank of $\ell$     is even, reverse each integer composition in $\mathcal{I}_d(s)$ individually, that is $\mathcal{I}_d(s)^R$, and replace Ds with the integer compositions from $\mathcal{I}_d(s)^R$.
%If the rank of $\ell$ is even, reverse each string in $\mathcal{I}_d(s)$ and replace the Ds in $\ell$ with the compositions from the reversed $\mathcal{I}_d(s)$.
\end{enumerate}
As an example, suppose we want to generate a Gray code for grand Motzkin paths with air pockets with $n=5$. We first generate the strings
in $\mathcal{F}(5)$:
\begin{align*}
\mathcal{F}(5) &= \mathcal{F}_2(5) \cdot \mathcal{F}_1(5)^{R} \\
&= 10100, 10010, 01010, 01001, 10001, 00101, 00001, 00010, 00100, 01000, 10000.
\end{align*}
Then, in the second step, for each string $w$ in $\mathcal{F}(5)$, we replace the 1s with Ds and the 0s with strings from $\mathcal{B}_d(n-d)$ or its reverse, depending on the parity of its rank.
{\small
\begin{align*}
\mathcal{B}_2(3) &= 111, 011, 101, 110;\\
\mathcal{B}_1(4) &= 1111
,0111
,0011
,1011
,1001
,0001
,0101
,1101
,1100
,0100
,1000, 
1010,\\
& \ \ \ \  0010
,0110
,1110;\\
\mathcal{B}_2(3)^{-1} &= 110, 101, 011, 111;\\
\mathcal{B}_1(4)^{-1} &= 1110
,0110
,0010
,1010
,1000
,0100
,1100
,1101
,0101
,0001
,1001
,1011, \\
& \ \ \ \  0011
,0111
,1111.
\end{align*}}

%permute the symbols of each string in the same way in the listing so that 
%rearrange the column of the listing to 
% the first string of $\mathcal{B}_d(n-d)^{-1}$ matches the last string generated in the previous sublisting;

There are many ways to permute the symbols of each string in the listing to ensure that the first string of $\mathcal{B}_d(n-d)^{-1}$ matches the last string generated in the previous sublisting. 
We maintain a linked list $ m $ that serves as an index mapping for non-down-step positions in the string $ w $ during step two of the Gray code concatenation for grand Motzkin paths. It is initialized as the sorted list of positions where $ w $ has 0s, guiding where bits from $ v $ (in $ \mathcal{B}_d(n-d) $) are inserted to form $ \ell $, with D placed at 1s in $ w $. When transitioning to the next $ w' $, $ m $ is updated by replacing the entry for the position turning from 0 to 1 with the position turning from 1 to 0; additionally, if a down-step is replaced by an up-step or horizontal step (adding a new non-down-step position), the new position is inserted at the beginning of $ m $, and all subsequent indices are incremented by one.
Then each string $w$ in $\mathcal{F}(5)$ becomes the following listings:
%\vspace{-0.95em}
{\small
\begin{align*}
10100, m=\{2, 4, 5\} &= \text{D}1\text{D}11, \text{D}0\text{D}11, \text{D}1\text{D}01, \text{D}1\text{D}10;\\
10010, m=\{2, 3, 5\} &= \text{D}11\text{D}0, \text{D}10\text{D}1, \text{D}01\text{D}1, \text{D}11\text{D}1;\\
01010, m=\{1, 3, 5\}&= 1\text{D}1\text{D}1, 0\text{D}1\text{D}1, 1\text{D}0\text{D}1, 1\text{D}1\text{D}0;\\
01001, m=\{1, 3, 4\} &= 1\text{D}10\text{D}, 1\text{D}01\text{D}, 0\text{D}11\text{D}, 1\text{D}11\text{D};\\
10001, m=\{2, 3, 4\} &= \text{D}111\text{D}, \text{D}011\text{D}, \text{D}101\text{D}, \text{D}110\text{D};\\
00101, m=\{2, 1, 4\} &= 11\text{D}0\text{D}, 01\text{D}1\text{D}, 10\text{D}1\text{D}, 11\text{D}1\text{D};\\
00001, m=\{3, 2, 1, 4\} &= 1111\text{D}, 1101\text{D}, 1001\text{D}, 1011\text{D}, 0011\text{D}, 0001\text{D}, 0101\text{D}, 0111\text{D}, \\& \ \ \ \ 0110\text{D}, 0100\text{D}, 0010\text{D}, 1010\text{D}, 1000\text{D}, 1100\text{D}, 1110\text{D};\\
00010, m=\{3, 2, 1, 5\} &= 111\text{D}0, 110\text{D}0, 100\text{D}0, 101\text{D}0, 001\text{D}0, 010\text{D}0, 011\text{D}0, 011\text{D}1, \\& \ \ \ \  010\text{D}1, 000\text{D}1, 001\text{D}1, 101\text{D}1, 100\text{D}1, 110\text{D}1, 111\text{D}1;\\
00100, m=\{4, 2, 1, 5\} &= 11\text{D}11, 11\text{D}01, 10\text{D}01, 10\text{D}11, 00\text{D}11, 00\text{D}01, 01\text{D}01, 01\text{D}11, \\& \ \ \ \  01\text{D}10, 01\text{D}00, 00\text{D}10, 10\text{D}10, 10\text{D}00, 11\text{D}00, 11\text{D}10;\\
01000, m=\{4, 3, 1, 5\} &= 1\text{D}110, 1\text{D}100, 1\text{D}000, 1\text{D}010, 0\text{D}010, 0\text{D}100, 0\text{D}110, 0\text{D}111, \\& \ \ \ \  0\text{D}101, 0\text{D}001, 0\text{D}011, 1\text{D}011, 1\text{D}001, 1\text{D}101, 1\text{D}111;\\
10000, m=\{4, 3, 2, 5\} &= \text{D}1111, \text{D}1101, \text{D}1001, \text{D}1011, \text{D}0011, \text{D}0001, \text{D}0101, \text{D}0111, \\& \ \ \ \  \text{D}0110, \text{D}0100, \text{D}0010, \text{D}1010, \text{D}1000, \text{D}1100, \text{D}1110.
\end{align*}}
%Note that when $n$ is larger, it might involve cases where we need to rearrange the columns of the listing. 
%For example, when $n=6$, we have 1D110D followed by 1D1D01. In this case, we have to rearrange the fifth and sixth columns of 1D1D01 because $\mathcal{B}_2(4)^{-1}$ starts with 1110.
\begin{algorithm}[t]
\caption{An algorithm to generate grand Motzkin paths with air pockets.}\label{alg:motzkin}
\begin{algorithmic}[1]
\Procedure{GenM}{$n$}
    \State{$(i, j, h, z) \gets (0, 0, 0, 0)$}
%    \State{$i \gets 0$}
%    \State{$j \gets 0$}
%    \State{$h \gets 0$}
    \State{$m \gets [x]_{x\in[1, 2, \dots, n]} \backslash [2y-1]_{y\in [1, 2, \dots, \lfloor\frac{n}{2}\rfloor]}$}
    \For{$d$ from $\lfloor \frac{n}{2} \rfloor$ to $1$} 
        \For{$\alpha \in \mathcal{F}_{d}(n)$}
        \If{$i \neq 0$} {$\alpha \gets \alpha^R$}
        \EndIf
        \State{Switch $m[x]$ and $m[y]$ where $\alpha_x \neq \alpha^\prime_x$ and $\alpha_y  \neq \alpha^\prime_y$}
         \For{$\beta \in \mathcal{B}_{d}(n-d)^{-j}$}
            \If{$|\beta| = 1  \mathrm{~and~} | \beta'| = 1 \mathrm{~and~} z = 0$}
            \State{\textsc{Print} $(0, 0, \ldots, 0)$}
            \State{$z \gets 1$}
            \EndIf
            \For{$\gamma \in \mathcal{I}_{d}(|\beta|)$} 
            \If{$h \neq 0$} {$\gamma \gets \gamma^R$}
            \EndIf
            \State{{\sc Print DecodeMotzkin}($\alpha, \beta, m, \gamma$)}
            \EndFor
            \State{$h \gets \neg h$}
            \State{$\beta' \gets \beta$}
        \EndFor
        \State{$j \gets \neg j$}
        \State{$\alpha^\prime \gets \alpha$}
        \EndFor
        \State{$i \gets \neg i$}
        \If{$i = 0$} {$m \gets [2d-1] \cdot m$}
        \Else { $m \gets [n-2d+2] \cdot m$}
        \EndIf
    \EndFor
\EndProcedure
\end{algorithmic}
\end{algorithm}

In the third step, for each string generated in the second step, we replace the Ds with the parts obtained from the integer compositions in $\mathcal{I}_d(s)$ or its mirror image, depending on the parity of its rank. 
For example, $\mathcal{I}_2(3) = (1+2, 2+1)$ and its mirror image is $(2+1, 1+2)$. 
Note that $\mathcal{I}_1(s)$ and its mirror image are identical (both contain only $s$), and $\mathcal{I}_2(2)$ and its mirror image are both $(1+1)$.
%The resulting listing for grand Motzkin paths with air pockets is thus as follows - note that 
We also add the special string $(0, 0, \ldots, 0)$ between the first pair of consecutive  strings that both have only one single up-step to ensure the listing includes all grand Motzkin paths with air pockets.  
Each grand Motzkin path is represented using a \emph{compact representation}, where a path $(a_1, a_2, \dots, a_n)$ is encoded as a string $b_1 b_2 \cdots b_n$ of length $n$ with $b_i = a_i$ if $a_i \geq 0$ (up-step or horizontal step), and $b_i$ represented as $\overline{|a_i|}$ if $a_i < 0$ (down-step).
\begin{center}
\begin{tabular}{l}
$\overline{1}1\overline{2}11, \overline{2}1\overline{1}11, \overline{1}0\overline{1}11, \overline{1}1\overline{1}01, \overline{1}1\overline{1}10, \overline{1}11\overline{1}0, \overline{1}10\overline{1}1, \overline{1}01\overline{1}1, \overline{2}11\overline{1}1, \overline{1}11\overline{2}1, 1\overline{1}1\overline{2}1, 1\overline{2}1\overline{1}1, $ \\
$0\overline{1}1\overline{1}1, 1\overline{1}0\overline{1}1, 1\overline{1}1\overline{1}0, 1\overline{1}10\overline{1}, 1\overline{1}01\overline{1}, 0\overline{1}11\overline{1}, 1\overline{2}11\overline{1}, 1\overline{1}11\overline{2}, \overline{1}111\overline{2}, \overline{2}111\overline{1}, \overline{1}011\overline{1}, \overline{1}101\overline{1}, $ \\
$\overline{1}110\overline{1}, 11\overline{1}0\overline{1}, 01\overline{1}1\overline{1}, 10\overline{1}1\overline{1}, 11\overline{2}1\overline{1}, 11\overline{1}1\overline{2}, 1111\overline{4}, 1101\overline{3}, 1001\overline{2}, 1011\overline{3}, 0011\overline{2}, 0001\overline{1}, $ \\
$0101\overline{2}, 0111\overline{3}, 0110\overline{2}, 0100\overline{1}, 00000, 0010\overline{1}, 1010\overline{2}, 1000\overline{1}, 1100\overline{2}, 1110\overline{3}, 111\overline{3}0, 110\overline{2}0, $ \\
$100\overline{1}0, 101\overline{2}0, 001\overline{1}0, 010\overline{1}0, 011\overline{2}0, 011\overline{3}1, 010\overline{2}1, 000\overline{1}1, 001\overline{2}1, 101\overline{3}1, 100\overline{2}1, 110\overline{3}1, $ \\
$111\overline{4}1, 11\overline{4}11, 11\overline{3}01, 10\overline{2}01, 10\overline{3}11, 00\overline{2}11, 00\overline{1}01, 01\overline{2}01, 01\overline{3}11, 01\overline{2}10, 01\overline{1}00, 00\overline{1}10, $ \\
$10\overline{2}10, 10\overline{1}00, 11\overline{2}00, 11\overline{3}10, 1\overline{3}110, 1\overline{2}100, 1\overline{1}000, 1\overline{2}010, 0\overline{1}010, 0\overline{1}100, 0\overline{2}110, 0\overline{3}111, $ \\
$0\overline{2}101, 0\overline{1}001, 0\overline{2}011, 1\overline{3}011, 1\overline{2}001, 1\overline{3}101, 1\overline{4}111, \overline{4}1111, \overline{3}1101, \overline{2}1001, \overline{3}1011, \overline{2}0011, $ \\
$\overline{1}0001, \overline{2}0101, \overline{3}0111, \overline{2}0110, \overline{1}0100, \overline{1}0010, \overline{2}1010, \overline{1}1000, \overline{2}1100, \overline{3}1110.$
\end{tabular}
\end{center}

Pseudocode of the algorithm is given by Algorithm~\ref{alg:motzkin}. 
It follows our three-stage algorithm using three nested loops to enumerate strings in \(\mathcal{F}(n)\), \(\mathcal{B}_d(n-d)\), and \(\mathcal{I}_d(s)\), which encode the positions of down-steps, the assignments of up-steps and horizontal steps, and the magnitudes of down-steps, respectively. 
The algorithm maintains a linked list \(m\), which serves as an index mapping for non-down-step positions during the second stage to ensure Gray code properties in the concatenation of \(\mathcal{B}_d(n-d)\) listings. 
It also tracks the last generated string \(\alpha'\) and identifies the positions \(x\) and \(y\) where the current string \(\alpha\) differs from \(\alpha'\) (i.e., \(\alpha_x \neq \alpha'_x\) and \(\alpha_y \neq \alpha'_y\)) to update \(m\) by swapping these indices. 
When reducing the number of down-steps, the algorithm updates \(m\) by prepending the position of the removed down-step position as the first element.
The function \textsc{DecodeMotzkin} is then used to decode the grand Motzkin path with air pockets when given $\alpha \in \mathcal{F}(n)$, $\beta \in \mathcal{B}_d(n-d)$, the linked list $m$, and $\gamma \in \mathcal{I}_d(s)$.
For simplicity, the pseudocode does not include details on handling the special string $(0, 0, \dots, 0)$. 
%The case for including the special string $(0, 0, \cdots , 0)$ is omitted from the pseudocode for simplicity. 
A complete Python implementation of the algorithm is
given in Appendix A.

\begin{theorem}\label{GenMGray}
The algorithm \textsc{GenM} generates a list of all grand Motzkin paths with air pockets in 2-Gray code order.
\end{theorem}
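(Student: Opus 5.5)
The proof has two parts: completeness (every path appears exactly once) and the Gray code property (consecutive paths differ in at most two symbols). For completeness, we use the three-stage decomposition. A grand Motzkin path with air pockets with $d\ge 1$ down-steps is determined by three pieces of data:
\begin{itemize}
\item the down-step position string $q$, which is a Fibonacci word of weight $d$;
\item the up/horizontal string $r$, which lies in $\mathcal{B}_d(n-d)$, i.e. has at least $d$ ones;
\item a composition of $s=|r|$ into $d$ parts.
\end{itemize}
Conversely, every such triple decodes to a valid path, since the parts are positive and sum to $s$, so the path returns to the $x$-axis.

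The only path with $d=0$ is $(0,\dots,0)$, and it is inserted exactly once. The map $m$ is a bijection between the indices of $r$ and the zero positions of $w$. Permuting symbols of $\mathcal{B}_d(n-d)$ consistently therefore preserves the set of strings generated. Together with the facts that $\mathcal{F}(n)$, $\mathcal{B}_d(t)$ and $\mathcal{I}_d(s)$ each list their sets without repetition, this shows every path is printed exactly once.

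For the Gray code property, we split consecutive outputs into four cases according to which loop advances.

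\emph{(a) Within one $\mathcal{I}_d(s)$ block.} Only the down-step magnitudes change. By Lemma~\ref{IsGray} at most two symbols change, and reversing the compositions individually (the $\gamma^R$ case) preserves this.

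\emph{(b) $\beta$ advances while $\alpha$ is fixed.} By Lemma~\ref{BnGray}, $\beta$ and $\beta'$ differ in at most two bits, so $s$ changes by at most one. The composition blocks alternate between $\mathcal{I}_d(s)$ and $\mathcal{I}_d(s')^R$. By Lemma~\ref{IsStart}, the last composition of one block is $(s-d+1)+1+\dots+1$ and the first of the next reversed block is $1+\dots+1+(s'-d+1)$, or symmetrically. I must verify that the junction costs at most two symbol changes in total, counting both the bit changes and the down-step changes. This is the delicate point. The two bit flips plus a change of the large part already risk exceeding two, so I expect to need the finer structure of the flip-swap transitions:
\begin{itemize}
\item A swap leaves $s$ unchanged. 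Then the only changes are the two swapped bits, and the junction is clean provided the compositions match. I must check that the parity alternation makes the end of one block coincide with the start of the next. If it does not, I would examine whether $\mathcal{I}_d(s)$ ending at $(s-d+1)+1\cdots$ and $\mathcal{I}_d(s)^R$ starting at $(s-d+1)+1\cdots$ (the reverse of its ending) gives equality.
\item A flip changes one bit and $s$ by one. The large part then changes by one, for a total of two changes.
\end{itemize}
So the alternation via $h$ should make the end composition of one block equal the start composition of the next, up to the large part.

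\emph{(c) $\alpha$ advances.} By Lemma~\ref{Fn-Gray} and Lemma~\ref{lemma-start}, a swap of a D with a non-D position occurs. The map $m$ is updated so that the bit labels are carried over. Since $\mathcal{B}_d$ is reversed and relabelled so its first string equals the previous last string, the only changes are that the moved D position now holds the old bit and the old D position receives a bit. I would argue that the first string of the new $\mathcal{B}$ block equals the last of the old one under the permutation. Then at most two positions change: the vacated D position and the new D position. This requires the composition across the junction to be identical, again handled by the $h$ parity choice and Lemma~\ref{IsStart}.

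\emph{(d) $d$ decreases.} One D is removed, and its position is prepended to $m$. The $\mathcal{B}$ block then starts at $1^{n-d+1}$ (Lemma~\ref{BnFull}), with the new position receiving a 1. I must check that exactly one down-step becomes an up-step and the remaining compositions absorb the change in $s$ by adjusting a single part.

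Finally, the special string $(0,\dots,0)$ is placed between two consecutive paths with a single up-step. Each such path has one up-step and one down-step of magnitude one, so each differs from $0^n$ in exactly two symbols.

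The main obstacle is cases (b)–(d): the boundary matching between the alternately mirrored composition listings and the changing value of $s$. I would first tabulate, for each type of transition (swap vs.\ flip in $\mathcal{B}$, swap in $\mathcal{F}$, weight drop), the end and start compositions dictated by Lemma~\ref{IsStart} and the parity of $h$, and check that the total change is at most two. This is verified against the $n=5$ listing.
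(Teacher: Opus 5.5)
\textbf{Verdict: same case split as the paper, but cases (c) and (d) are unproved, and your claim in (d) is false for half the parities.}

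Your case split matches the paper's. It cites the three component lemmas for steps inside one listing, then checks the junctions where $\beta$, $\alpha$ and $d$ advance, and the inserted all-zero path. Your case (b) is essentially the paper's argument. One slip there: $\mathcal{I}_d(s)^R$ begins with the mirror of the \emph{first} composition $1+\cdots+1+(s-d+1)$ of $\mathcal{I}_d(s)$, i.e.\ with $(s-d+1)+1+\cdots+1$. That is the last composition of $\mathcal{I}_d(s)$, which is why the blocks glue.

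The gap is that (c) and (d) are left as ``I must check'' items, and agreement with the $n=5$ listing proves nothing in general. For (c) you need one more fact: in $\mathcal{F}_d(n)$ the moving 1 stays strictly between its neighbouring 1s, by the greedy rule of Hassler et al. So the relocated D keeps its rank among the Ds and therefore its part; otherwise a third position would change. Since $s$ is unchanged and $h$ flips, the compositions at the junction coincide, and only the two swapped positions change.

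In (d) your description is wrong for half the parities. Since $j$ is never reset, the first block after $d$ drops is $\mathcal{B}_{d-1}(n-d+1)^{-1}$, starting at $1^{n-d}0$, whenever the previous block ran forward and ended at $1^{n-d-1}0$ (e.g.\ $n=7$, $d=2\to 1$). Starting at $1^{n-d+1}$ there would change at least three symbols. The argument that works runs as follows.
\begin{enumerate}
\item Prepend the removed position $p$ to $m$. The new first string ($1^{n-d}0$ or $1^{n-d+1}$) agrees with the old last one ($1^{n-d-1}0$ or $1^{n-d}$) on all old positions and puts an up-step at $p$, so $s$ increases by exactly one.
\item By Lemma~\ref{lemma-start}, $p$ is the leftmost D when $\mathcal{F}_d(n)$ was traversed unmirrored, and the rightmost otherwise.
\item By Lemma~\ref{IsStart} and the flip of $h$, the last old composition and the first new one carry their large part at the same end (both at the leftmost D, or both at the rightmost D).
\item Hence either $p$ held the large part, so the surviving parts are all 1 and only the new extreme D changes (to $s-d+3$). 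Or $p$ held a 1, and only the large part changes, from $s-d+1$ to $s-d+3$.
\end{enumerate}
So the change is not ``increment the largest part by one'', as the paper's sketch says. What matters is that exactly one surviving part changes, which together with $p$ gives two symbols.

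Finally, ``inserted exactly once'' still needs the existence of two consecutive weight-one $\beta$'s. The paper takes this from the flip-swap construction.
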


\begin{proof}
The proof of this theorem is provided in Appendix C.
\end{proof}

\begin{theorem}
The algorithm \textsc{GenM} generates a 2-Gray code for grand Motzkin paths with air pockets in constant amortized time per string using $O(n^2)$ space.
\end{theorem}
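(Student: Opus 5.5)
The 2-Gray property is already established by Theorem~\ref{GenMGray}, so I only need the running time and memory bounds. The approach is a standard amortization over the three nested loops of Algorithm~\ref{alg:motzkin}. The output cost is excluded, as in the proof for $\mathcal{F}(n)$. The idea is to show that the work done in every loop is dominated by the number of strings printed inside it.

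\textbf{Innermost loop.} For a fixed pair $(\alpha,\beta)$, the innermost loop runs over $\mathcal{I}_d(|\beta|)$. This listing is nonempty, since $|\beta|\ge d$ by the construction of $\mathcal{B}_d(n-d)$. By the lemma on $\mathcal{I}_d(s)$, it is produced in constant amortized time per composition. Taking the mirror image $\gamma^R$ needs no extra work beyond reading the composition right to left. \textsc{DecodeMotzkin} must not rebuild the path from scratch. Instead I will argue that it updates the current path in $O(1)$ time, because by Lemma~\ref{IsGray} consecutive compositions differ in at most two parts. Those two positions are located through the linked list $m$ (for $\beta$) and directly from $\alpha$'s down-step positions (for $\gamma$), each in $O(1)$ when $m$ and the list of down-step positions are maintained as linked lists.

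\textbf{Middle and outer loops.} Each $\beta\in\mathcal{B}_d(n-d)^{-j}$ is produced in constant amortized time by the flip-swap lemma. Reversing the listing is handled by running the BRGC-style recursion in the opposite direction, or equivalently by traversing its recursion tree in reverse order, with the same amortized cost. Each $\beta$ triggers at least one print, so this cost is charged to those prints. The same holds one level up. Each $\alpha\in\mathcal{F}(n)$ is produced in constant amortized time by the lemma on $\mathcal{F}(n)$. The swap in $m$ touches only the at most two positions where $\alpha$ and $\alpha'$ differ (Lemma~\ref{Fn-Gray}), so it costs $O(1)$. Every $\alpha$ spawns at least one $\beta$, since $\mathcal{B}_d(n-d)$ contains $1^{n-d}$. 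The special string $(0,\dots,0)$ and the flag updates for $h$, $j$, $z$ are $O(1)$ each.

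\textbf{Main obstacle.} The main obstacle is the transition cost between sublistings. This includes prepending an index to $m$ when $d$ decreases, with the ``increment subsequent indices'' step, and the permutation of symbols applied to $\mathcal{B}_d(n-d)^{-1}$. These can cost up to $O(n)$. I will charge them as follows.
\begin{itemize}
\item The prepend-and-increment happens only $\lfloor n/2\rfloor$ times in total, giving $O(n^2)$ overall. This is negligible against the exponential total number of paths (at least $|\mathcal{F}(n)|$).
\item The permutation is never materialized. It is encoded implicitly by $m$, which changes in $O(1)$ per $\alpha$.
\end{itemize}

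Space is the sum of the generators' $O(n^2)$ memory, plus $O(n)$ for $m$, $\alpha'$, $\beta'$ and the current path. This gives $O(n^2)$ and completes the argument.
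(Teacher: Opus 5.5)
Your proposal is correct and follows essentially the same route as the paper's proof: constant amortized generation of $\mathcal{F}(n)$, $\mathcal{B}_d(n-d)$ and $\mathcal{I}_d(s)$, constant-time maintenance of $m$, and constant-time insertion of $(0,\dots,0)$, to which you add details the paper leaves implicit, notably that every inner loop is nonempty so outer-loop work can be charged to prints, and that the $\lfloor n/2\rfloor$ prepends to $m$ cost only $O(n^2)$ in total. One small correction: indexed lookups such as $m[x]$ or ``the $t$-th down-step'' are not $O(1)$ in a linked list, so keep these structures (and the inverse of $m$) as arrays; this is harmless, because the occasional $O(n)$ shift is absorbed by the same global amortization you already use for the prepends, and that argument also covers the cost of restarting the inner generators on one-element sublistings.
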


\begin{proof}
As standard for generation algorithms, the time required to output a string, that is the function \textsc{DecodeMotzkin}, is not part of the analysis.
The index mapping linked list $m$ can be maintained in constant time.
By Lemma~\ref{Fn-Gray}, Lemma~\ref{BnGray}, and Lemma~\ref{IsGray}, the listings $  \mathcal{F}(n)  $, $  \mathcal{B}_d(n-d)  $, and $  \mathcal{I}_d(s)  $ can be generated in constant amortized time per string using $O(n^2)$ space.
The special string $(0, 0, \dots, 0)$ can be included in the listing by tracking the weight of $\beta$, which can be done in constant time.
Thus, \textsc{GenM} generates grand Motzkin paths with air pockets in constant amortized time per string using $O(n^2)$ space.
\end{proof}

\section{Generating grand Dyck paths with air pockets}\label{sec:dyck}

A grand Dyck path with air pockets is a grand Motzkin path with air pockets that uses no horizontal steps. To generate grand Dyck paths with air pockets, we simplify Algorithm~\ref{alg:motzkin} by omitting the second stage, which assigns horizontal steps. 
After selecting \( d \) down-step positions, the remaining \( n-d \) positions are assigned up-steps, and the value \( n-d \) (the number of up-steps) is distributed into the \( d \) down-step magnitudes. 
Pseudocode of the algorithm for grand Dyck paths with air pockets is given by Algorithm~\ref{alg:dyck}. 
It eliminates the nested loop for enumerating strings in \(\mathcal{B}_d(n-d)\) and the linked list \( m \) for index mapping. 
The function \textsc{DecodeDyck} constructs the grand Dyck path with air pockets from inputs \(\alpha \in \mathcal{F}_{d}(n)\), representing down-step positions, and \(\gamma \in \mathcal{I}_{d}(s)\), representing down-step magnitudes. 
For example, Algorithm~\ref{alg:dyck} generates the following grand Dyck paths with air pockets for \( n = 5 \):
\begin{center}
\small
\begin{tabular}{l}
(-4, 1, 1, 1, 1),\ (1, -4, 1, 1, 1),\ (1, 1, -4, 1, 1),\ (1, 1, 1, -4, 1),\ (1, 1, 1, 1, -4),\\
(1, 1, -2, 1, -1),\ (1, 1, -1, 1, -2),\ (1, -1, 1, 1, -2),\ (1, -2, 1, 1, -1),\ (1, -2, 1, -1, 1),\\
(1, -1, 1, -2, 1),\ (-1, 1, 1, -2, 1),\ (-2, 1, 1, -1, 1),\ (-2, 1, 1, 1, -1),\ (-1, 1, 1, 1, -2),\\
(-1, 1, -2, 1, 1),\ (-2, 1, -1, 1, 1).
\end{tabular}
\end{center}
A complete Python implementation is provided in Appendix A. 

\begin{theorem}\label{GenDGray}
The algorithm \textsc{GenD} generates a list of all grand Dyck paths with air pockets in 2-Gray code order.
\end{theorem}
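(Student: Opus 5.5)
The proof has two parts: completeness (every path listed exactly once) and the 2-Gray property. The Gray property splits into three kinds of consecutive pairs: within one composition block, across two blocks with the same $d$, and across a change of $d$.

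\textbf{Completeness.} A grand Dyck path with air pockets is determined by two pieces of data:
\begin{itemize}
\item the set of down-step positions, a binary string $q$ of length $n$ with no two consecutive 1s and weight $d$, where $1\le d\le\lfloor n/2\rfloor$ because the $n-d$ up-steps must be at least $d$;
\item the magnitudes of the down-steps, read from left to right, which form a composition of $s=n-d$ into $d$ parts.
\end{itemize}
This correspondence is a bijection. \textsc{GenD} runs over every $q$ exactly once, since $\mathcal{F}(n)$ is the concatenation of the $\mathcal{F}_d(n)$ or their mirror images. For each $q$ it runs over every composition in $\mathcal{I}_d(n-d)$ or its mirror image. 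Reversing strings and reversing compositions are bijections, so every path appears exactly once.

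\textbf{Within a block.} Fix $\alpha$ and let $\gamma$ run through $\mathcal{I}_d(s)$ or $\mathcal{I}_d(s)^R$. Only the down-step values change, and by Lemma~\ref{IsGray} at most two of them change. Mirroring each composition keeps this property.

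\textbf{Same $d$, consecutive $\alpha$.} Let $\alpha\to\alpha'$ be consecutive strings of $\mathcal{F}_d(n)$ (possibly mirrored). By the greedy rule, $\alpha'$ is obtained by moving a single 1 to a 0 that lies strictly between its neighbouring 1s. Hence the left-to-right order of the down-steps is unchanged, and I will confirm this against the description in~\cite{Hassler24KMS}.

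The flag $h$ alternates, so the composition block for $\alpha$ is $\mathcal{I}_d(s)$ and the block for $\alpha'$ is $\mathcal{I}_d(s)^R$, or the other way round. By Lemma~\ref{IsStart}, $\mathcal{I}_d(s)$ ends with $(s-d+1)+1+\dots+1$. The mirror image of its first composition $1+\dots+1+(s-d+1)$ is exactly this composition. Similarly, $\mathcal{I}_d(s)^R$ ends with $1+\dots+1+(s-d+1)$, which is the first composition of $\mathcal{I}_d(s)$.

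So the composition is the same on both sides of the boundary, and the ordered down-step magnitudes are preserved. The only change is that the moved down-step value and an up-step exchange positions, which is two symbol changes.

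\textbf{Change of $d$ (main obstacle).} Between blocks with $d$ and $d-1$ down-steps, by Lemma~\ref{lemma-start} and the proof of Lemma~\ref{Fn-Gray}, the last string of one sublisting and the first string of the next differ by deleting one 1, namely an extreme one. That down-step becomes an up-step, and $s$ increases by one.

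Suppose the last composition used has $1$ at that extreme down-step, say $(s-d+1)+1+\dots+1$ when the rightmost 1 is deleted. The removed down-step changes from $-1$ to $+1$, a net gain of $2$ in height. This is compensated by changing the large part $s-d+1$ into $s-d+3=(s+1)-(d-1)+1$.

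The resulting composition $(s-d+3)+1+\dots+1$ has $d-1$ parts, and by Lemma~\ref{IsStart} it is an endpoint of $\mathcal{I}_{d-1}(s+1)$. So the transition costs exactly two symbol changes, provided the orientation bits $i$ and $h$ make the new block start at that endpoint.

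The delicate part is the parity bookkeeping. I will check the following, across both parities of $d$ (which decide whether $\mathcal{F}_d(n)$ is mirrored) and the parity of $|\mathcal{F}_d(n)|$ (which fixes $h$ at the boundary):
\begin{itemize}
\item the deleted 1 is on the same side as the part $1$ of the final composition, not on the side of the large part;
\item the next composition block begins with its large part at the same down-step as before.
\end{itemize}
I will first verify this on the listed $n=5$ and small $n$ cases. If the flags do not match in some case, I will argue from the algorithm that $h$ is toggled per $\alpha$ in the way the pseudocode specifies, which is the mechanism that forces the match.
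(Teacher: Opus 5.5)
Your completeness argument, the within-block case and the same-$d$ case are fine. In the same-$d$ case you supply the fact the paper leaves implicit: each move in $\mathcal{F}_d(n)$ keeps a $1$ inside its gap, so the composition assigned left to right to the down-steps is unchanged across the junction. (In \textsc{GenD} the per-$\alpha$ flag is $j$, not $h$.)

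The gap is in the change of $d$. You only handle the case where the deleted extreme down-step carries a part equal to $1$, and you plan to verify that this always happens. It does not. For $n=9$, $\mathcal{F}_4(9)$ has $\binom{6}{4}=15$ strings. Its last string $001010101$ is therefore processed with $j=0$ and ends with the composition $2+1+1+1$, giving $(1,1,-2,1,-1,1,-1,1,-1)$. The next string is $000010101$ (the mirror of $101010000$), processed with $j=1$, giving $(1,1,1,1,-4,1,-1,1,-1)$. The deleted down-step at position $3$ is exactly the one carrying the large part. So both properties you list fail, and no appeal to how the flag is toggled can force them. The transition is nevertheless a valid two-symbol change, and your argument does not account for it.

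The missing idea is that no parity bookkeeping is needed. $\mathcal{I}_d(s)$ begins with its large part on the right and ends with it on the left. Since $j$ is toggled after every $\alpha$, including the last $\alpha$ of a $d$-block, the last composition of one block and the first composition of the next always have their large part on the same side. Now split on what the deleted extreme down-step carries.
\begin{itemize}
\item If it carries the large part $s-d+1$, the remaining $d-1$ magnitudes are all $1$. The new first composition of $s+1$ differs from $1+\cdots+1$ in exactly one part.
\item If it carries a $1$, the large part sits at the opposite extreme and stays there after the deletion. The new first composition has its large part $s-d+3$ on that same side, so again only that part changes.
\end{itemize}
Adding the change of the deleted step into an up-step gives exactly two symbol changes in every case. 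The paper's own one-line treatment (``increment the largest part by one'') also skips this case split. In general the surviving part must grow by $k+1$, where $k$ is the deleted magnitude.
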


\begin{proof}
The proof of this theorem is provided in Appendix C.
\end{proof}
\begin{algorithm}[t]
\caption{An algorithm to generate grand Dyck paths with air pockets.}\label{alg:dyck}
\begin{algorithmic}[1]
\Procedure{GenD}{$n$}
%    \State{$i \gets 0$}
%    \State{$j \gets 0$}
    \State{$(i, j) \gets (0, 0)$}
    \For{$d$ from $\lfloor \frac{n}{2} \rfloor$ to $1$} 
        \For{$\alpha \in \mathcal{F}_{d}(n)$}
            \If{$i \neq 0$} {$\alpha \gets \alpha^R$}
            \EndIf
            \For{$\gamma \in \mathcal{I}_{d}(n-d)$}
            \If{$j \neq 0$} {$\gamma \gets \gamma^R$}
            \EndIf
            \State{\textsc{Print} {\sc DecodeDyck}($\alpha, \gamma$)}
            \EndFor
            \State{$j \gets \neg j$}
        \EndFor
        \State{$i \gets \neg i$}
    \EndFor
\EndProcedure
\end{algorithmic}
\end{algorithm}

\begin{theorem}\label{GenDCAT}
The algorithm \textsc{GenD} generates a 2-Gray code for grand Dyck paths with air pockets in constant amortized time per string using $O(n^2)$ space.
\end{theorem}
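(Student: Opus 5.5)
The statement has two parts. The 2-Gray property is Theorem~\ref{GenDGray}, proved in Appendix C, so the plan is to invoke it and spend the effort on the complexity claim. Following the paper's convention, the cost of \textsc{DecodeDyck} (producing the output string) is excluded from the analysis. The argument will charge all remaining work to the strings printed.

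The work splits into three nested loops. For the outer loop over $\alpha$, I will use the lemma that $\mathcal{F}(n)$ can be generated in constant amortized time per string using $O(n^2)$ memory. Mirroring $\alpha$ when $i\neq 0$ costs nothing extra, because it only changes the read/print order and needs no copying. For the inner loop over $\gamma$, I will use the lemma that $\mathcal{I}_d(s)$ can be generated in constant amortized time per composition in $O(n^2)$ space; here $s=n-d$. The mirrored version $\mathcal{I}_d(s)^R$ is again just a change of reading direction, so it is also constant amortized. The parity flags $i$ and $j$ are updated in $O(1)$ per iteration.

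Next I will argue that the outer-loop cost is dominated by the printed strings. Every $\alpha\in\mathcal{F}_d(n)$ has $d\ge 1$, and $s=n-d\ge d$ because $d\le\lfloor n/2\rfloor$. Hence $\mathcal{I}_d(n-d)$ is nonempty, containing $\binom{n-d-1}{d-1}\ge 1$ strings. So each $\alpha$ produces at least one printed path, and the amortized cost of generating $\alpha$, plus the $O(1)$ flag work, can be charged to the first path printed under it. Each printed path then receives $O(1)$ charge from the outer level and $O(1)$ amortized from the inner generator. The one caveat is the cost of starting $\mathcal{I}_d(s)$ from its initial composition. I expect this is the main obstacle: the initial composition $1+\dots+1+(s-d+1)$ has size $d$, so setting it up naively costs $O(d)$ rather than $O(1)$.

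I would handle this in one of two ways. The first option is to keep $\mathcal{I}_d(n-d)$ precomputed in $O(n^2)$ space and replay it forward or mirrored. This fixes the setup cost, since $d$ is constant within a stage. However, replaying the stored list costs its length each time, so total time stays proportional to the output, and the stored list needs $O(\binom{n-d-1}{d-1}\cdot d)$ space, which may exceed $O(n^2)$. So I would rather use the second option: avoid storing the list and re-run the CAT generator of \cite{Hassler24KMS,Hassler24GASCom,Hassler24RAIRO} for each $\alpha$. The $O(d)$ initialization then has to be absorbed. This is easy when $\binom{n-d-1}{d-1}\ge d$. For the small cases where $s=d$ or $s=d+1$ the list is short, and the initialization is instead bounded against the $O(d)$ work already implicit in generating $\alpha$'s neighbourhood of $\mathcal{F}_d(n)$, since the strings of $\mathcal{F}_d(n)$ have weight $d$. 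Combining the pieces gives constant amortized time per path. The space is the $O(n^2)$ of the two generators plus $O(n)$ for the current $\alpha$, $\gamma$ and flags.
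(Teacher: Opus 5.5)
\textbf{The gap is in how you handle restarting the inner generator.} Your treatment of the 2-Gray part (via Theorem~\ref{GenDGray}) and of the outer loop is fine. You are also more careful than the paper: its proof just cites the constant-amortized-time lemmas for $\mathcal{F}(n)$ and $\mathcal{I}_d(s)$ and never mentions that the inner generator is restarted once per $\alpha$. The problem is the way you close that restart issue.

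First, the bad case is narrower than you say. The inner list has $\binom{s-1}{d-1}$ entries with $s=n-d$. For $s=d+1$ this is $\binom{d}{d-1}=d$, which is already in your ``easy'' regime. The only short inner lists occur when $n=2d$, where $\mathcal{I}_d(d)$ is the single composition $1+\dots+1$.

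For that block, charging the $O(d)$ setup to ``the $O(d)$ work already implicit in generating $\alpha$'s neighbourhood of $\mathcal{F}_d(n)$'' is not a valid bound. Lemma~\ref{lem:CATFd} gives only $O(1)$ amortized work per $\alpha$, so there is no $\Theta(d)$ budget to charge against. Even if that generator did spend $\Theta(d)$ per string, that would be additional cost, not a credit that pays for the restart. Concretely, the block has $\binom{d+1}{d}=d+1$ printed paths against $\Theta(d^2)$ setup work, so a local, per-$\alpha$ charging argument genuinely fails there.

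\textbf{How to repair it.} Amortize over the whole listing rather than per $\alpha$. The block $d=n/2$ occurs only for even $n$ and costs $O(n^2)$ extra in total. The listing contains far more paths: by Theorem~\ref{EnumDyck}, the $d=2$ term alone is $\binom{n-1}{2}(n-3)=\Theta(n^3)$, and the total grows exponentially. So the overhead is $O(1)$ per printed path over the entire run.

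Alternatively, note that $1+\dots+1$ is its own mirror image and is unchanged throughout that block. The inner generator therefore need not be restarted there at all.

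With either fix your argument goes through. As written, though, the step you yourself flag as the main obstacle is not actually discharged.
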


\begin{proof}
As standard for generation algorithms, the time required to output a string, that is the function \textsc{DecodeDyck}, is not part of the analysis.
By Lemma~\ref{Fn-Gray} and Lemma~\ref{IsGray}, the listings
$  \mathcal{F}(n)  $ and $  \mathcal{I}_d(s)  $ can be generated in constant amortized time per string using $O(n^2)$ space.
Thus, \textsc{GenD} generates grand Dyck paths with air pockets in constant amortized time per string using $O(n^2)$ space.\qed\end{proof}

%For example, consider the grand Motzkin path with air pockets represented by the tuple $m = (1, 1, -1, 1, -4, 1, -1, 1, 1, 1, 1, 1, -4, 1)$.

\begin{comment}
\begin{figure}[h]
\centering
\begin{tikzpicture}[scale=0.6]
    % Grand Motzkin Path: [1, 1, -1, 1, -4, 1, -1, 1, 1, 1, 1, 1, -4, 1]
    % Path: (0,0) -> (1,1) -> (2,2) -> (3,1) -> (4,2) -> (5,-2) -> (6,-1) -> (7,-2) -> (8,-1) -> (9,0) -> (10,1) -> (11,2) -> (12,3) -> (13,-1) -> (14,0)
    % y-values: 0, 1, 2, 1, 2, -2, -1, -2, -1, 0, 1, 2, 3, -1, 0
    % Min y = -2, Max y = 3
    \draw[->] (0,0) -- (15,0) node[right] {$x$}; % x-axis, length 15 for 14 steps
    \draw[->] (0,-2.5) -- (0,4.5) node[above] {$y$}; % y-axis, from -2.5 to 4.5
    % x-axis ticks and labels
    \foreach \x in {1,2,...,14} {
        \draw (\x,0.1) -- (\x,-0.1) node[below] {\x};
    }
    \draw (0,0.1) -- (0,-0.1) node[below left] {$O$}; % Origin labeled as O
    % y-axis ticks and labels
    \foreach \y in {-2,-1,0,1,2,3,4} {
        \draw (0.1,\y) -- (-0.1,\y) node[left] {\y};
    }
    % Path
    \draw[black, line width=0.5mm] (0,0) -- (1,1) -- (2,2) -- (3,1) -- (4,2) -- (5,-2) -- (6,-1) -- (7,-2) -- (8,-1) -- (9,0) -- (10,1) -- (11,2) -- (12,3) -- (13,-1) -- (14,0);
    \foreach \x/\y in {0/0, 1/1, 2/2, 3/1, 4/2, 5/-2, 6/-1, 7/-2, 8/-1, 9/0, 10/1, 11/2, 12/3, 13/-1, 14/0} \draw[draw=black, fill=white] (\x,\y) circle (3pt); % Hollow circles
\end{tikzpicture}
\caption{An example of grand Dyck path with air pockets for $m = (1, 1, -1, 1, -4, 1, -1, 1, 1, 1, 1, 1, -4, 1)$.}
\label{fig:Dyck}
\end{figure}

\end{comment}

\section{Enumeration} \label{sec:enumeration}

This section provides the enumeration formulae for grand Motzkin paths with air pockets and grand Dyck paths with air pockets. 

\begin{theorem}\label{enumMot}
The number of grand Motzkin paths with air pockets of length \( n \) is given by:
\begin{equation}\nonumber
1 + \sum_{d=1}^{\lfloor \frac{n}{2} \rfloor} \binom{n-d+1}{d} \sum_{i=0}^{n-2d} \binom{n-d}{i} \binom{n-d-i-1}{d-1}.
\end{equation}
\end{theorem}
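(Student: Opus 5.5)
We prove the formula by the same three-stage decomposition that underlies \textsc{GenM}: we count paths according to the number $d$ of down-steps. The term $d=0$ contributes exactly one path. With no down-steps the path can only return to the $x$-axis if it has no up-steps, so it must be the all-horizontal path $(0,0,\ldots,0)$. This accounts for the leading $1$.

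For $d\ge 1$, a grand Motzkin path with air pockets with $d$ down-steps is determined by three independent choices, and the count factors accordingly.

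\textbf{Stage 1 (positions of the down-steps).} The down-step positions form a binary string $q$ of length $n$ and weight $d$ with no two consecutive 1s. Choosing these positions is the standard stars-and-bars count: place $n-d$ zeros, which create $n-d+1$ gaps, and choose $d$ of the gaps. This gives $\binom{n-d+1}{d}$ strings. It is nonzero only when $d\le \lfloor n/2\rfloor$ (for $d$ beyond that range there are also not enough up-steps), which explains the upper limit of the outer sum.

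\textbf{Stage 2 (up-steps versus horizontal steps).} The remaining $n-d$ positions are split into up-steps and horizontal steps, encoded by a string $r$. Let $i$ be the number of horizontal steps. Choosing their positions among the $n-d$ slots gives $\binom{n-d}{i}$ ways, and then $s=n-d-i$ up-steps remain.

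\textbf{Stage 3 (magnitudes of the down-steps).} The path ends on the $x$-axis exactly when the down-step magnitudes $k_1,\dots,k_d\ge 1$ sum to $s$. These are the compositions of $s$ into $d$ parts, and by the lemma counting integer compositions their number is $\binom{s-1}{d-1}=\binom{n-d-i-1}{d-1}$.

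A composition requires $s\ge d$, that is, $i\le n-2d$. This gives the range $0\le i\le n-2d$ of the inner sum. Terms with larger $i$ would in any case vanish under the usual convention for binomial coefficients.

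The decomposition is a bijection. Given a path, it determines $(q,r,\gamma)$ uniquely. Conversely, any triple of the three kinds reassembles into a valid path: the path starts at the origin, ends on the $x$-axis because the magnitudes sum to the number of up-steps, and has no consecutive down-steps because $q$ has no consecutive 1s. The path is allowed to go into the fourth quadrant, so no positivity constraint arises. Summing the product of the three counts over $i$ and then over $d$ yields the stated formula.

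The only step needing care is this bijection claim. In particular we must confirm that no further constraint is hidden in the definition, for instance that a down-step of magnitude $k$ may jump below the axis freely. This holds because only the first and fourth quadrants are required. As a sanity check, $n=3$ gives $1+\binom{3}{1}\bigl(\binom{2}{0}\binom{1}{0}+\binom{2}{1}\binom{0}{0}\bigr)=1+3\cdot 3=10$, matching Figure~\ref{fig:M&D}.
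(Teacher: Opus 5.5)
Your proof is correct and takes essentially the same route as the paper's. You condition on the number $d$ of down-steps and count their positions by $\binom{n-d+1}{d}$, the horizontal-step placements by $\binom{n-d}{i}$ with $0\le i\le n-2d$, and the magnitudes as compositions of $n-d-i$ into $d$ parts; you then add $1$ for the all-horizontal path.

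One small slip: $\binom{n-d+1}{d}$ is nonzero for every $d\le\lceil n/2\rceil$ (for example, $n=3$, $d=2$ gives $1$). So the cutoff $d\le\lfloor n/2\rfloor$ does not come from that binomial vanishing. It comes only from your parenthetical reason, namely that $s=n-d-i\ge d$ forces $2d\le n$.
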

 
\begin{proof}
For a path with \( d \) down-steps (\( 1 \leq d \leq \lfloor \frac{n}{2} \rfloor \)):
\begin{enumerate}
    \item Choose \( d \) positions for the down-steps among the \( n \) total steps, ensuring no two down-steps are consecutive. The number of ways to select \( d \) non-consecutive positions is the number of binary strings of length \( n \) with \( d \) 1s and no consecutive 1s, given by \( \binom{n-d+1}{d} \);
    \item For the remaining \( n-d \) non-down-step positions, assign \( i \) of them as horizontal steps (and thus \( n-d-i \) as up-steps). The number of ways to choose \( i \) positions is \( \binom{n-d}{i} \), where \( i \) ranges from 0 to \( n-2 \).  Note that there must be at least $  d  $ up-steps since the path must return to the $x$-axis, which implies $  n - d - i \geq d  $ and thus $  0 \leq i \leq n - 2d  $;
    \item Assign magnitudes to the \( d \) down-steps such that their sum is \( n-d-i \) (the number of up-steps). The number of ways to distribute \( n-d-i \) into \( d \) positive integers is \( \binom{n-d-i-1}{d-1} \) by the shorthand binary representation of integer compositions.
\end{enumerate}
For a fixed \( d \), summing over all possible numbers of horizontal steps \( i \) from 0 to \( n-2d \) gives:
\[
\sum_{i=0}^{n-2d} \binom{n-d}{i} \binom{n-d-i-1}{d-1}.
\]
Multiplying by the number of ways to choose down-step positions and summing over \( d \) from 1 to \( \lfloor \frac{n}{2} \rfloor \), then adding one for the path with only horizontal steps:
\[
1 + \sum_{d=1}^{\lfloor n/2 \rfloor} \binom{n-d+1}{d} \sum_{i=0}^{n-2d} \binom{n-d}{i} \binom{n-d-i-1}{d-1}.
\]
This matches the formula in the theorem. \qed
\end{proof}

\begin{theorem}\label{EnumDyck}
For \(n \ge 2d\), the number of grand Dyck paths with air pockets of length \(n\) is given by:
\[
\sum_{d=1}^{\lfloor \frac{n}{2} \rfloor} \binom{n-d+1}{d}  \binom{n-d-1}{d-1}.
\]
\end{theorem}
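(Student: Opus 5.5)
We prove Theorem~\ref{EnumDyck} by specializing the counting argument of Theorem~\ref{enumMot} to paths without horizontal steps. Equivalently, we can set $i=0$ in that formula and drop the extra $1$ that counted the all-horizontal path $(0,0,\ldots,0)$, since that path is not a grand Dyck path. Because a grand Dyck path with air pockets is exactly a grand Motzkin path with air pockets using no horizontal steps, we only need to classify such paths by their number $d$ of down-steps.

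First, a grand Dyck path with air pockets of length $n\ge 1$ has $d\ge 1$. Otherwise all $n$ steps would be up-steps, and the path would end at height $n\neq 0$. Fix $d$. By the air-pocket condition, the down-step positions form a binary string of length $n$ with weight $d$ and no two consecutive $1$s, that is, a Fibonacci word of order $2$ and weight $d$ as in Stage~1. We count these by the standard stars-and-bars argument. Place the $n-d$ zeros; this creates $n-d+1$ gaps, and we choose $d$ distinct gaps, each receiving a single $1$. This gives $\binom{n-d+1}{d}$ choices. All remaining $n-d$ positions are forced to be up-steps.

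Next, the path ends on the $x$-axis exactly when the magnitudes $k_1,\dots,k_d\ge 1$ of the down-steps sum to $n-d$. Such tuples are precisely the integer compositions of $n-d$ into $d$ parts. By the lemma on compositions via the shorthand binary representation, there are $\binom{n-d-1}{d-1}$ of them. This number is positive exactly when $n-d\ge d$, i.e.\ $d\le \lfloor n/2\rfloor$, which explains both the upper summation limit and the hypothesis $n\ge 2d$. The two choices are independent, and conversely every pair (admissible position set, composition) determines a valid grand Dyck path with air pockets. So the number of paths with $d$ down-steps is $\binom{n-d+1}{d}\binom{n-d-1}{d-1}$, and summing over $1\le d\le\lfloor n/2\rfloor$ gives the formula.

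The only delicate point is checking that this correspondence is a bijection. Decoding must recover the position set and the composition uniquely, which holds because the compact tuple representation records each step explicitly. We also have to exclude $d=0$ and $d>\lfloor n/2\rfloor$ correctly. As a sanity check against the known values: for $n=3$ the formula gives $\binom{3}{1}\binom{1}{0}=3$, and for $n=4$ it gives $\binom{4}{1}\binom{2}{0}+\binom{3}{2}\binom{1}{1}=4+3=7$, matching the terms $3$ and $7$ of the generating function $G(t)$.
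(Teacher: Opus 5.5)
Your proof is correct and follows the paper's argument: choose $d$ pairwise non-adjacent down-step positions in $\binom{n-d+1}{d}$ ways, make every other step an up-step, count the compositions of $n-d$ into $d$ parts as $\binom{n-d-1}{d-1}$, and sum over $1\le d\le\lfloor n/2\rfloor$. Your gap argument for $\binom{n-d+1}{d}$ and your explicit exclusion of $d=0$ fill in details that the paper states without justification.
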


\begin{proof}
The proof of this theorem is provided in Appendix C.
\end{proof}

A complete Python implementation to compute the closed-form formula for the number of grand Motzkin paths and grand Dyck paths with air pockets is given in Appendix B.

\section*{Acknowledgments.}
This research is supported by the Macao Polytechnic University research grant (RP/FCA-02/2022 and fca.6a16.fe65.7). % and the Macao Science and Technology Development Fund (Project code: FDCT/0033/2023/RIA1).
The research of Lin Chen is supported by the National Natural Science Foundation of China under Grant No. 62172455.

\newpage

\bibliographystyle{abbrv}
\bibliography{myrefs}     

\newpage
\appendix
\input{appendix/ap_code}
\clearpage
\input{appendix/enmu}

\clearpage

\section{Proofs of Lemma 11 and Theorems 1, 3 and 6}\label{sec:gens_opt}
\input{appendix/appendix_fib}

\clearpage

\end{document}

%% file: appendix/ap_code.tex
\section{Python code to generate grand Motzkin
paths and grand Dyck paths with air pockets}\label{sec:pyccode}
{
\tiny
\lstset{style=mystyle}
\begin{lstlisting}[language=python]
from math import ceil

class IntComp:
    def __init__(self, n, k, no_consecutive=False):
        self.n = n
        self.k = k
        self.nc = no_consecutive

        if not no_consecutive:
            self.s = [i for i in range(k+1)]
            self.c = [1 for _ in range(k+1)]
            self.d = [0 for _ in range(k+1)]
            self.p = [[0 for _ in range(n+1)] for _ in range(k+1)]
            for i in range(1, k+1):
                self.p[i][i] = 1
            self.s += [n+1]

        else:
            self.s = [2*i-1 for i in range(k+1)]
            self.c = [1 for _ in range(k+1)]
            self.d = [0 for _ in range(k+1)]
            self.p = [[0 for _ in range(n+1)] for _ in range(2*k+1)]
            for i in range(1, k+1):
                self.p[2*i-1][2*i-1] = 1
            self.s[0] = -1
            self.s += [n+2]

        self.idx = []

    def next(self):
        for i in range(1, self.k+1):
            if self.d[i]:
                if self.s[i] == i and not self.nc or self.s[i] == 2*i-1 and self.nc:
                    self.d[i] = 0
                else:
                    j = max(self.s[i-1]+1, i) if not self.nc else max(self.s[i-1]+2, 2*i-1)

                    self.idx = [self.s[i]-1, j-1]
                    self.s[i] = j
                    self.p[i][j] = self.c[i]
                    for t in range(1, i):
                        self.d[t] = 1
                        self.c[t] += 1
                        self.p[t][self.s[t]] = self.c[t]
                    return True

            for j in range(self.s[i]+1, self.s[i+1] - (0 if not self.nc else 1)):
                if self.p[i][j] != self.c[i]:

                    self.idx = [self.s[i]-1, j-1]
                    self.s[i] = j
                    self.p[i][j] = self.c[i]
                    for t in range(1, i):
                        self.d[t] = 1
                        self.c[t] += 1
                        self.p[t][self.s[t]] = self.c[t]
                    return True
        return False

    def output(self):
        res = []
        j = 1
        for i in range(1, self.n+1):
            if self.s[j] != i:
                res += [0]
            else:
                res += [1]
                j += 1
        return res

def brgc_k(n, k, d):
    a = [0]*(n+1)
    a[n] = 1
    # m = 1

    def Gen(t, z, w):
        if t < 1 or t + w <= k:
            if t < 1:
                yield a[1:n+1], w
            else:
                yield [1]*t+a[t+1:n+1], w+t

        else:
            if not (z % 2):
                a[t] = 1
                yield from Gen(t - 1, 0, w + 1)
                a[t] = 0
                yield from Gen(t - 1, 1, w)
            else:
                a[t] = 0
                yield from Gen(t - 1, 0, w)
                a[t] = 1
                yield from Gen(t - 1, 1, w + 1)
                a[t] = 0

    yield from Gen(n, 1 - d, 0)

def grand_dyck(n):
    d1, d2 = 1, 1
    for u in range(n-1, ceil(n/2)-1, -1):
        ic1 = IntComp(n, n-u, True)
        while True:
            a = ic1.output()
            ic2 = IntComp(u-1, n-u-1)
            while True:
                b = ic2.output()
                yield a, d1, b, d2
                if not ic2.next():
                    break
            d2 = 1 - d2
            if not ic1.next():
                break
        d1 = 1 - d1

def decode_gd(a, d1, b, d2):
    res = []
    down_list = []
    if d2:
        down = 0
        for i in b:
            down -= 1
            if i == 1:
                down_list.append(down)
                down = 0
        down -= 1
        down_list.append(down)
    else:
        down = -1
        for i in reversed(b):
            if i == 1:
                down_list.append(down)
                down = 0
            down -= 1
        down_list.append(down)

    idx = 0
    for i in a if d1 else reversed(a):
        if i == 0:
            res += [1]
        else:
            res += [down_list[idx]]
            idx += 1
    return res

def grand_motzkin(n):
    du = 1
    dd = 1
    di = 1
    u_ = 0
    s_uh = None

    for uh in range(ceil(n/2), n):
        d = n - uh
        d_dist_ic = IntComp(n, d, True)
        while True:
            d_dist = d_dist_ic.output()

            if s_uh is None:
                s_uh = []
                for i in range(len(d_dist)):
                    if d_dist[i] == 0:
                        s_uh.append(i)
            elif len(d_dist_ic.idx) > 0:
                for i in range(len(s_uh)):
                    if s_uh[i] == d_dist_ic.idx[1] and dd == 1 or s_uh[i] == n-d_dist_ic.idx[1]-1 and dd == 0:
                        s_uh[i] = d_dist_ic.idx[0] if dd else n-d_dist_ic.idx[0]-1

            uh_dist_brgc = brgc_k(uh, d, du)

            for uh_dist, u in uh_dist_brgc:

                # insert 0*n
                if u_ == 1 and u == 1:
                    yield -1, n, -1, -1, -1, -1
                    u_ = -1
                u_ = u if u_ != -1 else u_

                d_comp_ic = IntComp(u-1, d-1)
                while True:
                    d_comp = d_comp_ic.output()
                    yield d_dist, dd, uh_dist, s_uh, d_comp, di

                    if not d_comp_ic.next():
                        break
                di = 1 - di

            du = uh_dist[-1]

            if not d_dist_ic.next():
                break
        dd = 1 - dd
        s_uh = [2*d - 2 if dd == 1 else n - 2*d + 1] + s_uh

def decode_gm(d_dist, dd, uh_dist, s_uh, d_comp, di):
    if d_dist == -1:
        return [0]*dd
    res = []
    down_list = []
    if di:
        down = 0
        for i in d_comp:
            down -= 1
            if i == 1:
                down_list.append(down)
                down = 0
        down -= 1
        down_list.append(down)
    else:
        down = -1
        for i in reversed(d_comp):
            if i == 1:
                down_list.append(down)
                down = 0
            down -= 1
        down_list.append(down)

    idx_i = 0
    d_dist = d_dist if dd else list(reversed(d_dist))

    for i in range(len(d_dist)):
        if d_dist[i] == 0:
            res += [uh_dist[s_uh.index(i)]]
        else:
            res += [down_list[idx_i]]
            idx_i += 1
    return res

if __name__ == '__main__':
    print('Input n: ')
    n = int(input())
    print('allow horizontal-steps? 1 yes, 0 no')
    is_gmp = int(input())

    is_gmp = True if is_gmp == 1 else False
    gen_func = grand_motzkin if is_gmp else grand_dyck
    decode_func = decode_gm if is_gmp else decode_gd

    cnt = 0
    for ap in gen_func(n):
        cnt += 1
        curr = decode_func(*ap)
        print('%d: %s' % (cnt, str(curr)))

    print('Total: %d' % cnt)

\end{lstlisting}
}

%% file: appendix/enmu.tex
\section{Python code to enumerate the numbers of grand Motzkin paths and grand Dyck paths with air pockets}

{
\tiny
\lstset{style=mystyle}
\begin{lstlisting}[language=python]
import math

def comb(n, k):
    try:
        return math.comb(n, k)
    except ValueError:
        return 0

gdp = lambda n: sum([comb(n - k + 1, k) * comb(n - k - 1, k - 1) for k in range(1, n // 2 + 1)])
gmp = lambda n: sum(
    [comb(n - k + 1, k) * sum([comb(n - k, n - k - i) * comb(n - k - i - 1, k - 1) for i in range(n - 2 + 1)]) for k in range(1, n // 2 + 1)]) + 1

if __name__ == "__main__":
    print("Grand Dyck paths or Grand Motzkin paths with air pockets? (1/2)")
    this_type = input()
    assert this_type in ["1", "2"]
    if this_type == "1":
        func = gdp
        path_type = "grand Dyck paths"
    else:
        func = gmp
        path_type = "grand Motzkin paths"

    print("Enter n:")
    n = int(input())
    result = func(n)
    print(f"the number of {path_type} with air pockets is: {result}")

\end{lstlisting}
}

%% file: appendix/appendix_fib.tex
\begin{comment}
\begin{lemma}
\label{consecutive_upstep_paths}
In the 2-Gray code constructed by the three-stage algorithm, there exist two consecutive grand Motzkin paths with air pockets, each containing exactly one up-step, and the all-zero string \( (0, 0, \dots, 0) \) is positioned between them.
\end{lemma}
\begin{proof}
Stage 1: By Lemma~\ref{lemma-start}, \( F_1(n) \) includes \( q = 0^{n-1}1 \) (with \( d = 1 \)).

Stage 2: By Lemma~\ref{BnFull}, \( B_1(n-1) \) starts with \( 1^{n-1} \) and ends with \( 1^{n-2}0 \), listing all binary strings of length \( n-1 \) with weight at least 1.
By Lemma~\ref{BnGray} and \cite{}, consecutive strings in \( B_1(n-1) \) differ by at most two bit changes. Select \( r = 010^{n-3} \) (weight 1, up-step at position 2) and its consecutive \( r' = 0010^{n-4} \) (weight 1, up-step at position 3), differing by two bits (position 2 changes from 1 to 0, position 3 from 0 to 1).
    
Stage 3: By Lemma~\ref{IsGray}, \( \mathcal{I}_1(1) = 1 \) assigns the down-step value as $-1$.

Construction: With \( q = 0^{n-1}1 \) and \( r = 010^{n-3} \), the path is \( (0, 1, 0, \dots, 0, -1) \). With \( q = 0^{n-1}1 \) and \( r' = 0010^{n-4} \), the path is \( (0, 0, 1, 0, \dots, 0, -1) \).

Inserting the all-zero string $(0,0,\dots,0)$ between
$(0,1,0,\dots,0,-1)$ and $(0,0,1,0,\dots,0,-1)$ maimtains the 2-Gray code. \qed
\end{proof}
\end{comment}

\begin{proof}[Lemma \ref{composition}]
It is clear that integer compositions in the shorthand binary representation are in bijection with integer compositions in the standard integer representation.
In the remainder of the proof, we demonstrate that a homogeneous 2-bit change in the shorthand binary representation corresponds to a two-symbol change in the standard integer representation.

In a homogeneous 2-bit change, a 1 at position \( j \) moves to position \( i \), with all intermediate bits being 0, affecting only the two bits at positions \( i \) and \( j \). 
For a composition with \( d \) parts, this transition corresponds to adjusting two adjacent parts in the standard representation. 
Thus, a homogeneous 2-Gray code in the shorthand binary representation induces a 2-Gray code in the standard integer representation, as only two adjacent parts change. \qed
\end{proof}

\hfill 

\begin{proof}[Theorem \ref{GenMGray}]
By Lemma~\ref{Fn-Gray}, Lemma~\ref{BnGray}, and Lemma~\ref{IsGray}, consecutive strings in $\mathcal{F}(n)$, $\mathcal{B}_d(n-d)$, and $\mathcal{I}_d(s)$ differ by at most two symbol changes and include all possible arrangements of down-steps (via $\mathcal{F}(n)$ and $\mathcal{I}_d(s)$), up-steps, and horizontal steps (via $\mathcal{B}_d(n-d)$) for grand Motzkin paths with air pockets.
We now examine the concatenation of these listings, breaking them down into the following cases:
\begin{itemize}
\item Concatenation of listings of $\mathcal{I}_d(|\beta|)$ where $\beta \in \mathcal{B}_d(n-d)$ is updated: W.L.O.G., consider the concatenation of the listings $\mathcal{I}_d(|\beta'|)$ and $\mathcal{I}_d(|\beta|)^R$, where $|\beta'|$ may differ from $|\beta|$ and $\beta'$ immediately precedes $\beta$ in $\mathcal{B}_d(n-d)$.
Note that consecutive strings in $\mathcal{B}_d(n-d)$ differ by either a swap between a position with 1 and a position with 0 or a single bit change~\cite{10.1007/978-3-030-85088-3_15,flipswap}, so $|\beta'|$ and $|\beta|$ differ by at most one.
In the case where $\beta'$ and $\beta$ differ by a swap, $|\beta'| = |\beta|$, and thus alternating the print order of each string in $\mathcal{I}_d(|\beta|)$ (by Lemma~\ref{IsStart}) ensures that the last string of $\mathcal{I}_d(|\beta'|)$ matches the first string of $\mathcal{I}_d(|\beta|)^R$.
Thus, in this case, there are two bit changes resulting from the swap from $\beta'$ to $\beta$.
In the case of a single bit change between $\beta'$ and $\beta$, we update the first string of $\mathcal{I}_d(|\beta|)^R$ by incrementing or decrementing the largest part by one, which results in one symbol change between the last string of $\mathcal{I}_d(|\beta'|)$ and the first string of $\mathcal{I}_d(|\beta|)^R$.
Thus, in this case, there are a total of two bit changes: one from the change between $\beta'$ and $\beta$, and one between the last string of $\mathcal{I}_d(|\beta'|)$ and the first string of $\mathcal{I}_d(|\beta|)^R$.

\item Concatenation of listings of $\mathcal{B}_d(n-d)$ where $\alpha \in \mathcal{F}_d(n)$ is updated: W.L.O.G., consider the concatenation of the listings $\mathcal{B}_d(n-d)$ and $\mathcal{B}_d(n-d)^R$. By Lemma~\ref{BnFull}, alternating the reversal of $\mathcal{B}_d(n-d)$ and using the index mapping linked list $m$ ensures that the last string of $\mathcal{B}_d(n-d)$ matches the first string of $\mathcal{B}_d(n-d)^R$, while $\mathcal{I}_d(|\beta|)$ remains unchanged. Thus, in this case, there are two bit changes resulting from the update of $\alpha \in \mathcal{F}_d(n)$.

\item Concatenation of listings of $\mathcal{F}_d(n)$ where $d$ is updated: In this case, $d$ is decremented. W.L.O.G., consider the concatenation of the listings $\mathcal{F}_d(n)$ and $\mathcal{F}_{d-1}(n)^R$.
In this case, we replace a down-step with an up-step and increment the largest part of $\mathcal{I}_d(|\beta|)$ by one, resulting in a total of two symbol changes.
\end{itemize}
Finally, we insert the special string $(0, 0, \dots, 0)$ between the first pair of consecutive strings in $\mathcal{B}_1(n)$ that have only one up-step.
Such a pair of consecutive strings exists in $\mathcal{B}_1(n)$~\cite{10.1007/978-3-030-85088-3_15,flipswap}, and this insertion clearly maintains the 2-Gray code property, as the special string differs from both the previous and next strings by two symbol changes.
Thus, the resulting listing forms a 2-Gray code.
\qed
\end{proof}

\hfill

\begin{proof}[Theorem \ref{GenDGray}]
By Lemma~\ref{Fn-Gray} and Lemma~\ref{IsGray}, consecutive strings in $\mathcal{F}(n)$ and $\mathcal{I}_d(s)$ differ by at most two symbol changes and include all possible arrangements of down-steps (via $\mathcal{F}(n)$ and $\mathcal{I}_d(s)$) for grand Dyck paths with air pockets.
We now examine the concatenation of these listings, breaking them down into
the following two cases:
\begin{itemize}
\item Concatenation of listings of $\mathcal{I}_d(n-d)$ where $\alpha \in \mathcal{F}_d(n)$ is updated: W.L.O.G., consider the concatenation of the listings $\mathcal{I}_d(n-d)$ and $\mathcal{I}_d(n-d)^R$, alternating the print order of each string in $\mathcal{I}_d(n-d)$ (by Lemma~\ref{IsStart}) ensures that the last string of $\mathcal{I}_d(n-d)$ matches the first string of $\mathcal{I}_d(n-d)^R$.
Thus, in this case, there are two bit changes resulting from from the update of $\alpha \in \mathcal{F}_d(n)$.
\item Concatenation of listings of $\mathcal{F}_d(n)$ where $d$ is updated: In this case, $d$ is decremented. W.L.O.G., consider the concatenation of the listings $\mathcal{F}_d(n)$ and $\mathcal{F}_{d-1}(n)^R$.
In this case, we replace a down-step with an up-step and increment the largest part of $\mathcal{I}_d(n-d)$ by one, resulting in a total of two symbol changes.
\end{itemize}
Thus, the resulting listing forms a 2-Gray code.\qed
\end{proof}

\hfill 

%\begin{proof}[Theorem \ref{GenDCAT}]
%As standard for generation algorithms, the time required to output a string, that is the function \textsc{DecodeDyck}, is not part of the analysis.
%By Lemma~\ref{Fn-Gray} and Lemma~\ref{IsGray}, the listings
%$  \mathcal{F}(n)  $ and $  \mathcal{I}_d(s)  $ can be generated in %constant amortized time per string using $O(n^2)$ space.
%Thus, \textsc{GenD} generates grand Dyck paths with air pockets in constant amortized time per string using $O(n^2)$ space.\qed
%\end{proof}

\begin{proof}[Theorem \ref{EnumDyck}]
For a path with \( d \) down-steps (\( 1 \leq d \leq \lfloor \frac{n}{2} \rfloor \)):
\begin{enumerate}
    \item Choose \( d \) positions for the down-steps among the \( n \) total steps, ensuring no two down-steps are consecutive. The number of ways to select \( d \) non-consecutive positions is the number of binary strings of length \( n \) with \( d \) 1s and no consecutive 1s, given by \( \binom{n-d+1}{d} \);
    \item The remaining \( n-d \) non-down-step positions are up-steps. Assign magnitudes to the \( d \) down-steps such that their sum is \( n-d \) (the number of up-steps). The number of ways to distribute \( n-d \) into \( d \) positive integers is \( \binom{n-d-1}{d-1} \) by the shorthand binary representation of integer compositions.
    Note that there must be at least $  d  $ up-steps, since the path must return to the x-axis, which implies $  n - d \geq d  $ and $n \geq 2d$. 
\end{enumerate}
For a fixed \( d \), the number of grand Dyck paths with \( d \) down-steps is the product:
\[
\binom{n-d+1}{d} \binom{n-d-1}{d-1}.
\]
Summing over \( d \) from 1 to \( \lfloor \frac{n}{2} \rfloor \) gives:
\[
\sum_{d=1}^{\lfloor \frac{n}{2} \rfloor} \binom{n-d+1}{d} \cdot \binom{n-d-1}{d-1}.
\]
This matches the formula in the theorem. \qed
\end{proof}